\renewcommand{\Re}{\mathbb{R}}
\newcommand{\conv}{\mathrm{conv}}
\newcommand{\dd}[1]{\left \langle {#1}\right \rangle}
\newcommand{\ddp}{\dd{\cdot,\cdot}}
\newcommand{\sH}{\mathcal{H}}
\renewcommand{\sp}{\mathrm{span}}
\newcommand{\sK}{\mathcal{K}}
\newcommand{\oh}{\overline{h}}
\newcommand{\hchi}{\widehat{\chi}}
\newcommand{\mat}[1]{\left[\begin{matrix} #1  \end{matrix} \right]}
\newcommand{\dpp}[1]{\langle \hspace{-0.1cm} \langle #1  \rangle \hspace{-0.1cm} \rangle}
\newcommand{\dpt}[1]{\dpp{#1}}
\newcommand{\sA}{\mathcal{A}}
\newcommand{\sfrac}[2]{{\mbox{$\frac{#1}{#2}$}}}
\newcommand{\half}{{1/2}}
\newcommand{\ssC}{\mathfrak{C}}
\newcommand{\ssS}{\mathbb{S}}
\newtheorem{theorem}{Theorem}
\newtheorem{definition}{Definition}
\newtheorem{example}{Example}
\newtheorem{corollary}[theorem]{Corollary}
\newtheorem{proposition}[theorem]{Proposition}
\newtheorem{lemma}[theorem]{Lemma}
\newcommand{\qqed}{ }
\begin{document}

\title{Inner products for Convex Bodies%\thanks{Grants or other notes
%about the article that should go on the front page should be
%placed here. General acknowledgments should be placed at the end of the article.}
}
%\subtitle{Do you have a subtitle?\\ If so, write it here}

%\titlerunning{Minkowski inner products}        % if too long for running head

\author{David Bryant}
\address{DB: (Corresponding Author) Department of Mathematics and Statistics, University of Otago, Dunedin, New Zealand}
\email{david.bryant@otago.ac.nz}
\author{Petru Cioica-Licht}
\address{PC: Department of Mathematics and Statistics, University of Otago, Dunedin, New Zealand}
\email{pcioica@maths.otago.ac.nz}
\author{Lisa Orloff Clark}
\address{LOC: School of Mathematics and Statistics, Victoria University of Wellington, Wellington, New Zealand}
\email{lisa.clark@vuw.ac.nz}
\author{Rachael Young}
\address{RY: Department of Mathematics and Statistics, University of Otago, Dunedin, New Zealand}
\email{rachael.gray.young@gmail.com}

%\date{Received: date / Accepted: date}
% The correct dates will be entered by the editor

\begin{abstract}
We define a set inner product to be a function on pairs of convex bodies which is symmetric, Minkowski linear in each dimension, positive definite, and satisfies the natural analogue of the Cauchy-Schwartz inequality (which is not implied by the other conditions). We show that any set inner product can be embedded into an inner product space on the associated support functions, thereby extending fundamental results of H\"ormander and R\aa dstr\"om. The set inner product provides a geometry on the space of convex bodies. We explore some of the properties of that geometry, and  discuss an application of these ideas to the reconstruction of ancestral ecological niches in evolutionary biology. \\
{\bf Keywords} Inner product; convex body; Minkowski linear functionals; ecological niche\\ % \PACS{PACS code1 \and PACS code2 \and more} \\
 {\bf Math. Subject Classification}: 52A20, 52A27, 05C05.
\end{abstract}
\maketitle

\section{Introduction}
\label{intro}

A {\em convex body} is a closed, bounded, non-empty convex set. We use $\sK$ or $\sK_n$ to denote the set of convex bodies contained in $\Re^n$.  There is an extensive literature on convex bodies and their mathematics, with  metric spaces and even Banach spaces defined on $\sK$ (see \cite{Schneider2014a} for a comprehensive review). Here we investigate an  analog of a
real {\em inner product space} defined on $\sK$, motivated by applications in approximation and inference of convex bodies.

Our starting point is the definition of a {\em Minkowski linear functional} on $\sK$: a function $f:\sK \rightarrow \Re$ satisfying $f(\alpha A + \beta B) = \alpha f(A) + \beta f(B)$ for all $A,B \in \sK$ and $\alpha,\beta \geq 0$. A conventional {real} inner product is bilinear and positive definite. Replacing `bilinear' with `Minkowski bilinear' we obtain the following axioms for a set inner product $\ddp : \sK \times \sK \rightarrow \Re$.
\begin{enumerate}
\item[(A1)] Symmetry: $\dd{A,B} = \dd{B,A}$ for all $A,B \in \sK$.
\item[(A2)] Minkowski linearity with respect to the first variable: for all $\alpha,\beta \in \Re_{\geq 0}$ and $A,B,C \in \sK$,
\begin{equation}
\dd{\alpha A + \beta B , C} = \alpha \dd{A,C} + \beta \dd{B,C}. \label{eq:MinkowLinear}\end{equation}
\item[(A3)] Positive definite: $\dd{A,A} > 0$ when $A \neq  \{0\}$.
\end{enumerate}
The key difference between these axioms and those for a standard inner product is axiom (A2), as $\dd{\alpha A,B} = \alpha \dd{A,B}$ is only required for $\alpha \geq 0$.  
This small change makes a significant difference, including the fact that the  Cauchy-Schwartz Inequality  $\dd{A,B}^2 \leq \dd{A,A} \dd{B,B}$ does not follow from (A1)---(A3), as we prove in the following section.  We include the Cauchy-Schwartz inequality directly  as a fourth axiom
\begin{enumerate}
\item[(A4)] For all $A,B \in \sK$, $\dd{A,B}^2 \leq \dd{A,A} \dd{B,B}$  with equality if and only if  $A = \lambda B$ for some $\lambda \in \Re_{\geq 0}$.
\end{enumerate}

\begin{definition}
A {\bf set inner product} on $\sK$ is a function $\ddp : \sK \times \sK \rightarrow \Re$  satisfying (A1), (A2), (A3) and (A4). 
\end{definition}

%
%
%
%As an example, let $\sK$ be the set of closed, bounded intervals on the real line. Define 
%\[ \dd{A,B} = \max(A) \max(B) + \min(A) \min(B).\]
%Then (A1) and (A3) follow immediately. For $A,B \in \sK$ and $\alpha,\beta \in \Re_{\geq 0}$ we have $\max(\alpha A + \beta B) = \alpha \max(A) + \alpha \max(B)$  and  $\min(\alpha A + \beta B) = \alpha \min(A) + \alpha \min(B)$, giving (A2). Axiom (A4) is a consequence of
%\[ \dd{A,A} \dd{B,B} - \dd{A,B}^2 = (\max(A)\max(B) - \min(A)\min(B))^2  \geq 0.\]
%
%

Note that for any $a,b \in \Re^n$ we have
\[ \dd{\{a\},\{b\}} + \dd{-\{a\},\{b\}} = \dd{\{a\} - \{a\},\{b\}} = \dd{\{0\},\{b\}} = 0\]
so that $\ddp$ is bilinear when restricted to singletons. Thus the restriction of a set inner product to singleton sets is an inner product, called the {\bf induced inner product}.  
We also note that for $A \in \sK$,  $-A:=-1(A)$ is not the additive inverse of $A$ and hence $\sK$ is not a vector space.  So we can't replace (A2) by the stronger version
\begin{enumerate}
\item[(A2')\hspace{-1mm}] For all $\alpha,\beta \in \Re$ and $A,B,C \in \sK,\, 
\dd{\alpha A + \beta B , C} = \alpha \dd{A,C} + \beta \dd{B,C}
$
\end{enumerate}
because it leads to a contradiction: since $A - A = -(A-A)$and  $\dd{-B,B} = -\dd{B,B}$ for all $A,B \in \sK$ implies $\dd{A-A,A-A} = - \dd{A-A,A-A} = 0$ contradicting (A3).

\begin{example}
Let $\ddp:\sK_1 \times \sK_1 \rightarrow \Re$ be the function on closed and bounded intervals of $\Re$ given by 
\[\dd{[a,b],[c,d]} = ac + bd.\]
Then $\ddp$ is a set inner product.
\end{example}

In this paper we investigate  set inner products, with an eye to its potential application to the approximation and statistical inference of convex sets. Our main result is that set inner products are exactly inner products for the set of support functions (Theorem~\ref{thm:equiv}). This observation therefore extends the classical results of  \cite{Hormander1955a,Radstrom1952a} on embedding $\sK$ in a Banach space.

On one hand Theorem~\ref{thm:equiv} is a negative result: it shows that set inner products will not lead to fundamentally new mathematics which is not simply a consequence of standard functional analysis. On the other hand we obtain access to vast functional analysis toolkit to apply to the geometry and statistics of convex sets. We conclude the  second section by demonstrating that axiom (A4), the Cauchy-Schwartz inequality, is not implied by axioms (A1)---(A3).  

The geometry of $\sK$ is the subject of the third section. We examine lines and subspaces in the set inner product space, and establish basic orthogonality results. We make extensive use of Theorem~\ref{thm:equiv} and  classical results in convex analysis. Much is proved exclusively for an $\ell_2$ set inner product and the extent to which these results can be generalised  is not obvious.

In the final section, we show how the set inner product can be used to infer convex sets in comparative biology. Given an evolutionary tree and convex sets at the leaves (e.g. representing environmental niches) we use functional analysis techniques to infer niche geometries (convex sets) for ancestral species.

\section{Characterization of set inner products}

The {\em support function}  of a convex body $A \in \sK$ is defined 
\[h_A:\Re^n \rightarrow \Re:x \mapsto \sup\{a\cdot x:a \in A\}.\]
where $\cdot$ is 
the usual dot product in $\mathbb{R}^n$.
The properties of support functions are reviewed comprehensively in \cite{Schneider2014a}. We let $\sH$ denote the set of support functions for convex bodies in $\sK$. Then $h \in \sH$ if and only if  it is {\em sublinear}: for all $x,y \in \Re^n$ and $\alpha,\beta \geq 0$ we have
\begin{equation}
h(\alpha x + \beta y) \leq \alpha h(x) + \beta h(y). \label{eq:sublinear}
\end{equation}
Furthermore, for all $A,B \in \sK$ and $\lambda \geq 0$ we have
\begin{equation}
h_{A+B} = h_A + h_B \label{eq:supAdd},
\end{equation}
\begin{equation} 
h_{\lambda A} = \lambda h_A. \label{eq:supScale}
\end{equation}
and
\begin{equation}
A \subseteq B \mbox{ if and only if } h_A(x) \leq h_B(x) \mbox{ for all $x$.} \label{eq:supportSubset}
\end{equation}

Let $\ssS^{n-1}$ denote the unit sphere in $\Re^n$. Every function  $h \in \sH$ is determined by its restriction to $\ssS^{n-1}$; we denote this restriction by $\oh$. Let $\sH_S = \{\oh:h \in \sH\}$.  
The Hausdorff metric on $\sK$
\[d_H(A,B) = \min\{r: A \subseteq B + r\ssS^{n-1} \mbox{ and } B \subseteq A + r\ssS^{n-1}\}\]
has an elegant re-expression in terms of support functions
\[d_H(A,B) = \|\oh_A - \oh_B\|_\infty =  \sup\{|h_A(x) - h_B(x)| :  x \in \ssS^{n-1}\},\]
see \cite{Hormander1955a}. Hence the set of convex bodies on $\Re^n$ with the Hausdorff metric can be embedded naturally into a Banach space \cite{Hormander1955a,Radstrom1952a}. See \cite{Schmidt1986a} for useful generalizations of these results.  Schneider \cite[pg. 45] {Schneider2014a} observed that the span of $\sH_S$ is dense in $\ssC(\ssS^{n-1})$,  the algebra of continous functions on $S^{n-1}$ (with respect to the $L_\infty$ norm), and that every $f \in \ssC(\ssS^{n-1})$ can be approximated arbitrarily closely by the difference $\oh_A~-~\oh_B$ of two support functions. 

The Banach space embedding has proven a valuable tool for the study of random sets \cite{McClureETAL1975a,MolchanovETAL2005a} and also leads naturally to an $L_p$ metric for $\sK$:
\begin{equation}
d_p(A,B) =\| \oh_A  - \oh_B \|_p
\label{eq:L2distance}
\end{equation}
see \cite{Vitale1985a,McClureETAL1975a}.  Arnold and Wellerding proposed a Sobolev distance on $\sK$ \cite{ArnoldETAL1992a}. Several authors have used the fact that this, like the $L_2$ distance given by \eqref{eq:L2distance}, are inner product distances (e.g. \cite{Arnold1989a,Shephard1966a}). Here we attempt to put these results in a general setting.  

\begin{theorem} {\bf (Set inner product equivalence theorem)} \label{thm:equiv}
Let $\ddp$ be a bivariate map on $\sK$. Then $\ddp$ is a set inner product if and only if there is an inner product $\dd{\,\cdot , \cdot \,}_H$ on $\sp(\sH)$ such that 
\begin{equation}
\dd{A,B} = \dd{h_A,h_B}_H \label{eq:equiv}
\end{equation}
for all  $A,B \in \sK$.
\end{theorem}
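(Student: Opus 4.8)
The plan is to prove the two implications separately; the reverse one is routine bookkeeping and the forward one carries the content. \emph{Reverse direction.} Given an inner product $\dd{\cdot,\cdot}_H$ on $\sp(\sH)$, set $\dd{A,B}:=\dd{h_A,h_B}_H$. Then (A1) is immediate from symmetry of $\dd{\cdot,\cdot}_H$; (A2) follows from \eqref{eq:supAdd} and \eqref{eq:supScale}, which say that $A\mapsto h_A$ sends Minkowski combinations with nonnegative coefficients to genuine linear combinations, $h_{\alpha A+\beta B}=\alpha h_A+\beta h_B$; (A3) holds because $h_A=0$ exactly when $A=\{0\}$; and the inequality in (A4) is precisely the Cauchy--Schwarz inequality for $\dd{\cdot,\cdot}_H$. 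The only delicate point is the equality clause of (A4): equality in Cauchy--Schwarz yields linear dependence of $h_A$ and $h_B$ in $\sp(\sH)$, and this must be read back as ``$A=\lambda B$ for some $\lambda\ge 0$'', using \eqref{eq:supScale} together with the observation that a support function which is a negative scalar multiple of another support function is necessarily linear (so both correspond to points), a sub-case that is then checked by hand.

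\emph{Forward direction.} Given a set inner product $\dd{\cdot,\cdot}$ on $\sK$, I must manufacture an inner product on $\sp(\sH)$. The first step is to record that every element of $\sp(\sH)$ has the form $h_A-h_B$ with $A,B\in\sK$: a finite combination $\sum_i c_i h_{A_i}$, on collecting the positive and negative coefficients and applying \eqref{eq:supAdd} and \eqref{eq:supScale}, becomes a difference $h_A-h_B$. Combined with the classical injectivity of $A\mapsto h_A$ (part of the H\"ormander--R\aa dstr\"om embedding recalled above), this gives $h_A-h_B=h_{A'}-h_{B'}\iff A+B'=A'+B$. I then define
\[
\dd{h_A-h_B,\ h_C-h_D}_H:=\dd{A,C}-\dd{A,D}-\dd{B,C}+\dd{B,D}.
\]

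Next I would verify the four properties of this form. Well-definedness (independence of the chosen representatives) is exactly Minkowski linearity (A2): $A+B'=A'+B$ forces $\dd{A,C}+\dd{B',C}=\dd{A',C}+\dd{B,C}$, which handles the first slot, and (A1) handles the second. Bilinearity over $\Re$ is then a formal consequence of (A2) and the difference structure (e.g. $t(h_A-h_B)=h_{tA}-h_{tB}$ for $t\ge 0$ and $=h_{(-t)B}-h_{(-t)A}$ for $t<0$), and symmetry is built into the formula together with (A1); equation \eqref{eq:equiv} is the special case $B=D=\{0\}$, since $h_{\{0\}}=0$. The one substantive step is positive definiteness, and this is precisely where axiom (A4) is used: for $f=h_A-h_B$ we have $\dd{f,f}_H=\dd{A,A}-2\dd{A,B}+\dd{B,B}$, and (A4) with AM--GM gives $2\dd{A,B}\le 2\sqrt{\dd{A,A}\dd{B,B}}\le\dd{A,A}+\dd{B,B}$ (using $\dd{A,A},\dd{B,B}\ge 0$, which follows from (A3) and a direct check for $\{0\}$), so $\dd{f,f}_H\ge 0$; if $\dd{f,f}_H=0$ both inequalities are tight, forcing $\dd{A,A}=\dd{B,B}$ and $\dd{A,B}^2=\dd{A,A}\dd{B,B}$ with $\dd{A,B}\ge 0$, and then the equality clause of (A4) yields $A=B$ (or $A=B=\{0\}$), i.e. $f=0$.

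I expect this positive-definiteness argument to be the main obstacle: it is the single place where axiom (A4) genuinely bites, and it is the reason (A4) cannot be dropped. Everything else in the forward direction is formal manipulation of Minkowski linearity and the injective embedding $A\mapsto h_A$, i.e. a mild elaboration of the H\"ormander--R\aa dstr\"om embedding, and the reverse direction is a direct translation of the inner-product axioms apart from the Cauchy--Schwarz equality-case bookkeeping noted above.
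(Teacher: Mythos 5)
Your forward direction is, step for step, the paper's own argument: you form the same four-term expression on differences $h_A-h_B$, prove well-definedness from (A2) (you phrase it via the cancellation law $h_A-h_B=h_{A'}-h_{B'}\iff A+B'=A'+B$, the paper adds the corresponding $F$-terms directly --- same computation), get bilinearity formally, and obtain positive semidefiniteness from (A4) plus the AM--GM step $2\dd{A,B}\le 2\sqrt{\dd{A,A}\dd{B,B}}\le\dd{A,A}+\dd{B,B}$, with the equality case forcing $A=\lambda B$ and then $\lambda=1$. That part is correct and needs no further comment.

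The one step that would fail is the sub-case you defer to a ``hand check'' in the reverse direction, namely the equality clause of (A4). You correctly reduce it to the situation $h_A=c\,h_B$ with $c<0$, in which case $h_B$ and $-h_B$ are both sublinear, hence $h_B$ is linear and $A,B$ are singletons, say $B=\{b\}$ and $A=\{cb\}$. But the hand check does not go through: if $b\neq 0$ and $c<0$ there is no $\lambda\ge 0$ with $A=\lambda B$ (nor with $B=\lambda A$), yet $h_A$ and $h_B$ are linearly dependent, so equality holds in the Cauchy--Schwarz inequality for $\dd{\cdot,\cdot}_H$. Concretely, $A=\{-b\}$, $B=\{b\}$ with $b\neq 0$ gives $\dd{A,B}=-\dd{B,B}$ and $\dd{A,A}=\dd{B,B}$ under the paper's own $L_2$ set inner product, so $\dd{A,B}^2=\dd{A,A}\dd{B,B}$ while $A\neq\lambda B$ for any $\lambda\ge 0$. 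Thus the ``only if'' half of the equality condition in (A4), as literally stated, is \emph{not} a consequence of the existence of an inner product on $\sp(\sH)$; the condition would have to be weakened to ``$h_A$ and $h_B$ are linearly dependent'' for the reverse implication to hold. To be fair, this is a defect in the theorem's statement rather than in your argument --- the paper's own proof of the reverse direction is a single sentence that glosses over it entirely, whereas you at least isolated the exact sub-case where the trouble lives. Everything else in your proposal is sound and matches the paper's route.
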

\begin{proof}
If there is such an inner product on $\sp(\sH)$ then the set inner product axioms for $\ddp$ follow directly from the definitions and properties of inner products and support functions. 

For the converse, let $\ddp$ be a set inner product. Define $F:\sH \times \sH \rightarrow \Re$ by $F(h_A,h_B) = \dd{A,B}$ for all $A,B$. We will use $F$ to construct an inner product $G$ on $\sp(\sH)$ which agrees with $F$ on $\sH$. 
As $\sH$ is a convex cone, every element of $\sp(\sH)$ can be expressed as the difference of two elements in $\sH$. Define a function $G$ on $\sp(\sH) \times \sp(\sH)$ by
\[G(f-g,h-k) = F(f,h) - F(f,k) - F(g,h) + F(g,k).\]
To see that $G$ is well defined, suppose that $f_1,f_2,g_1,g_2,h_1,h_2,k_1,k_2 \in \sH$. If ${f_1-g_1 = f_2 - g_2}$ then 
\[F(f_1,h_1) + F(g_2,h_1) = F(f_1+g_2,h_1) = F(f_2+g_1,h_1) = F(f_2,h_1) + F(g_1,h_1)\]
and
\[F(f_1,k_1) + F(g_2,k_1) = F(f_1+g_2,k_1) = F(f_2+g_1,k_1) = F(f_2,k_1) + F(g_1,k_1)\]
from which we obtain
\[F(f_1,h_1) - F(g_1,h_1) - F(f_1,k_1) + F(g_1,k_1) = F(f_2,h_1) - F(g_2,h_1) - F(f_2,k_1) + F(g_2,k_1).\]
Similarly, if $h_1-k_1 = h_2 - k_2$ then
\[F(f_2,h_1) - F(g_2,h_1) - F(f_2,k_1) + F(g_2,k_1) = 
F(f_2,h_2) - F(g_2,h_2) - F(f_2,k_2) + F(g_2,k_2).\]
Hence if $f_1-g_1 = f_2 - g_2$ and $h_1-k_1 = h_2 - k_2$ then
\begin{equation*}
F(f_1,h_1) - F(g_1,h_1) - F(f_1,k_1) + F(g_1,k_1) = F(f_2,h_2) - F(g_2,h_2) - F(f_2,k_2) + F(g_2,k_2) \end{equation*}
We will show that $G$ is an inner product on $\sp(\sH)$.  

\begin{enumerate}[label=(\roman*)]
\item As $F$ is symmetric, so is $G$.
\item $G$ is additive, since
\begin{align*}
G((f_1-g_1)+(f_2-g_2),h-k) & = 	F(f_1+f_2,h) - F(f_1+f_2,k) - F(g_1+g_2,h) + F(g_1+g_2,k) \\
&=
F(f_1,h) - F(f_1,k) - F(g_1,h) + F(g_1,k) \\ & \quad \quad +
F(f_2,h) - F(f_2,k) - F(g_2,h) + F(g_2,k) \\
& = G(f_1-g_1,h-k)+G(f_2-g_2,h-k).
\end{align*}
\item For all $\lambda \geq 0$
\begin{align*}
G(\lambda(f-g),h-k) &= F(\lambda f,h) - F(\lambda f,k) - F(\lambda g,h) + F(\lambda g,k) \\ 
&= \lambda G(f-g,h-k)
\end{align*}
and 
\begin{align*}
G(-(f-g),h-k) & = G(g-f,h-k) \\
& = F(g,h) - F(g,k) - F(f,h) + F(f,k)\\
& = -G(f-g,h-k).	
\end{align*}
Hence $G$ is linear (and bilinear by symmetry).
\item
For all $f,g \in \sH$ there are $A,B$ such that $f = h_A$, $g = h_B$. Then 
\begin{align*}
G(f-g,f-g) &= F(f,f) - 2F(f,g) + F(g,g) \\
	& = \dd{A,A} - 2\dd{A,B} + \dd{B,B} \\ 
	& \geq \dd{A,A} - 2\sqrt{\dd{A,A}\dd{B,B}} + \dd{B,B} & \mbox{ by (A4)}\\
	& = (\sqrt{\dd{A,A}} - \sqrt{\dd{B,B}})^2 \\
	& \geq 0.
\end{align*}
\end{enumerate}
To see that $G$ is positive definite notice 
\[0 = G(f-g,f-g) \implies \dd{A,A}=\dd{B,B}=\dd{A,B}. \] 
Axiom (A4) implies $A = \lambda B$ for some non-negative scalar $\lambda$ and (A2) implies $\lambda = \lambda^2 = 1$ and hence $\lambda =1$.  Thus $f-g=0$. 
We have then that the map $G$ 
is an inner product on $\sp(\sH)$. Furthermore, for any $A,B \in \sK$ we have $\dd{A,B} = F(h_A,h_B) = G(h_A,h_B)$, giving \eqref{eq:equiv}. 
\qqed \end{proof}

\begin{corollary}
Let $\ddp$ be a bivariate map on $\sK$. Then $\ddp$ is a set inner product if and only if there is an inner product $\dd{\,\cdot , \cdot \,}_S$ on $\sp(\sH_S)$ such that 
\[\dd{A,B} = \dd{\oh_A,\oh_B}_S\]
for all  $A,B \in \sK$.
\end{corollary}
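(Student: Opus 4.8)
The plan is to deduce the Corollary directly from Theorem~\ref{thm:equiv} by transporting the inner product on $\sp(\sH)$ to $\sp(\sH_S)$ along the restriction map $h \mapsto \oh$. The key observation is that this restriction map is a linear bijection between $\sp(\sH)$ and $\sp(\sH_S)$: it is obviously linear, it is surjective onto $\sp(\sH_S)$ by the definition $\sH_S = \{\oh : h \in \sH\}$, and it is injective because every $h \in \sH$ (being sublinear, hence positively homogeneous of degree $1$) is completely determined by its restriction $\oh$ to $\ssS^{n-1}$ via $h(x) = \|x\| \, \oh(x/\|x\|)$ for $x \neq 0$ and $h(0) = 0$; this passes to differences and hence to all of $\sp(\sH)$. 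Denote this bijection by $R : \sp(\sH) \to \sp(\sH_S)$.

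First I would invoke Theorem~\ref{thm:equiv}: $\ddp$ is a set inner product if and only if there is an inner product $\dd{\,\cdot,\cdot\,}_H$ on $\sp(\sH)$ with $\dd{A,B} = \dd{h_A,h_B}_H$ for all $A,B \in \sK$. Then, given such a $\dd{\,\cdot,\cdot\,}_H$, I would define $\dd{\,\cdot,\cdot\,}_S$ on $\sp(\sH_S)$ by $\dd{u,v}_S = \dd{R^{-1}u, R^{-1}v}_H$. Since $R$ is a linear isomorphism, this transported form inherits bilinearity, symmetry, and positive-definiteness from $\dd{\,\cdot,\cdot\,}_H$, so it is an inner product on $\sp(\sH_S)$; and because $R h_A = \oh_A$, we get $\dd{\oh_A,\oh_B}_S = \dd{h_A,h_B}_H = \dd{A,B}$, which is the desired identity. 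Conversely, given an inner product $\dd{\,\cdot,\cdot\,}_S$ on $\sp(\sH_S)$ with $\dd{A,B} = \dd{\oh_A,\oh_B}_S$, the same construction with $R$ replaced by $R^{-1}$ produces an inner product $\dd{\,\cdot,\cdot\,}_H$ on $\sp(\sH)$ satisfying \eqref{eq:equiv}, and Theorem~\ref{thm:equiv} then yields that $\ddp$ is a set inner product.

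The only point requiring genuine care --- and hence the main (though mild) obstacle --- is the injectivity of $R$, i.e. that no nonzero element of $\sp(\sH)$ restricts to the zero function on $\ssS^{n-1}$. This is exactly positive $1$-homogeneity of support functions: if $f = h_A - h_B$ and $\oh_A = \oh_B$ on $\ssS^{n-1}$, then $h_A(x) = \|x\| h_A(x/\|x\|) = \|x\| h_B(x/\|x\|) = h_B(x)$ for all $x \neq 0$, and $h_A(0) = h_B(0) = 0$, so $f = 0$. Everything else is formal transport of structure along a linear isomorphism, so the write-up should be short. I would present it as: (1) state that $R : \sp(\sH) \to \sp(\sH_S)$, $h \mapsto \oh$, is a well-defined linear isomorphism, with the one-line homogeneity argument for injectivity; (2) observe that pulling back or pushing forward an inner product along $R$ preserves the inner-product axioms and the evaluation identity on support functions of convex bodies; (3) conclude by combining with Theorem~\ref{thm:equiv} in both directions.
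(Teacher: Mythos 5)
Your proposal is correct and follows essentially the same route as the paper: the paper's proof also invokes Theorem~\ref{thm:equiv} and transports the inner product between $\sp(\sH)$ and $\sp(\sH_S)$ via the positively homogeneous extension $f(x) = \|x\|\,\overline{f}(x/\|x\|)$, which is exactly your inverse map $R^{-1}$. You are somewhat more explicit than the paper about linearity, bijectivity, and the converse direction, but the underlying argument is the same.
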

\begin{proof}
If  $\dd{A,B} = \dd{\oh_A,\oh_B}_S$ then axioms (A1)---(A4) follow directly from the definitions and properties of inner products and support functions. Conversely, 
if $\ddp$ is a set inner product then by the theorem there is an inner product $\ddp_H$ on $\sp(\sH)$ such that $\dd{A,B} = \dd{h_A,h_B}_H$ for all $A,B$. For each $\overline{f} \in  \sp(\sH_S)$  define $f \in \sp(\sH)$ by $f(x) = \|x\| \overline{f}\left( \frac{x}{\|x\|} \right)$ for all $x \neq 0$. Defining 
\[\dd{\overline{f},\overline{g}}_S = \dd{f,g}_H\]
gives the required inner product on $\sp(\sH_S)$.
\qqed \end{proof}

\begin{example} \label{def:standardInner}
Let $\phi(x)$ denote the standard Gaussian density on $\Re^n$ and define  
\[\dpt{A,B} = \int h_A(x) h_B(x) d\phi(x).\]
Then by Theorem~\ref{thm:equiv} $\dpt{\,\cdot  , \cdot  \, }$ is a set inner product. Note that because $h_A(\lambda x) = \lambda h_A(x)$ for all $\lambda \geq 0$ we have that 
\[\int h_A(x) h_B(x) d\phi(x) = \int_{\ssS^{n-1}} \oh_A(x) \oh_B(x) d\mu(x)\]
for the Haar measure $\mu$ on $\ssS^{n-1}$, the unit sphere in $\Re^n$. 
\end{example}

Suppose that $\ddp$ is an inner product on $\Re^n$ and that we define the support functions with respect to this inner product  in place of the dot product. If we pick an arbitrary inner product $\ddp_H$ on $\sp(\sH)$ then there is no guarantee that the resulting {\em set} inner product 
\[ \dd{A,B}_s = \dd{h_A,h_B}_H \]
will agree with the original inner product on singletons. That is, we could well have 
\[ \dd{\{a\},\{b\}}_s =  \dd{h_{\{a\}},h_{\{b\}}}_H \neq \dd{a,b}\]
for some $a,b \in \Re^n$.  However for each such inner product $\ddp_H$ we can nevertheless find an invertible linear map $f:\Re^n \rightarrow \Re^n$ such that 
 \[ \dd{h_{\{a\}} \circ f ,h_{\{b\}} \circ f}_H = \dd{a,b}.\]

\begin{example} \label{ex:realLine}
The support functions for closed intervals on the real line are determined by two values, $h_{[a,b]}(1) = b$ and $h_{[a,b]}(-1) = -a$. Hence a generic set inner product on the real line can be written
\[\dd{[a,b],[c,d]} = \mat{b & -a} M \mat{d\\-c}\]
where $M$ is positive definite. The constraint that 
\[ \dd{[a,a],[c,c]} = ac\]
is obtained by scaling $M$ so that $\mat{1&-1} M \mat{1\\-1} = 1$. 
\end{example}

We made explicit use of the Cauchy-Schwartz inequality (A4) to prove that every set inner product corresponds to an inner product on the support functions. It is natural to ask whether this was strictly necessary, given that the inequality is implied by the axioms for standard inner products. The next construction gives a function which satisfies all of the  set inner product axioms except the Cauchy-Schwartz inequality.

\begin{proposition}
There is a bilinear function on $\sK$ which satisfies (A1), (A2) and (A3) but not  the Cauchy-Schwartz inequality (A4).
\end{proposition}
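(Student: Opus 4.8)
The plan begins from the structural equivalence of Theorem~\ref{thm:equiv}: a bivariate map on $\sK$ obeys (A1)---(A4) exactly when it is the restriction to support functions of a genuine inner product on $\sp(\sH)$. To retain (A1)---(A3) while breaking (A4) I will therefore look for a symmetric bilinear form on $\sp(\sH)$ that is strictly positive on the cone $\sH\setminus\{0\}$ but is \emph{not} positive definite---in fact is indefinite---on all of $\sp(\sH)$. Morally this ought to be possible because $\sH$, although it spans $\sp(\sH)$, is \emph{one-sided}: being a convex cone it fills only a wedge of the space, so a quadratic form can be positive along that wedge while being negative along directions in which no convex body ever points. There is no room for this on $\Re^1$---a one-line check using the description in Example~\ref{ex:realLine} shows each element of $\sp(\sH_1)$ is a nonnegative multiple of a support function or of the negative of one, so a form positive on $\sH_1\setminus\{0\}$ is automatically an inner product on $\sp(\sH_1)$ and satisfies Cauchy--Schwartz---hence I will work in $\Re^2$.

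Concretely, for $C\in\sK_2$ set $v(C)=\big(h_C(e_1),h_C(e_2),h_C(-e_1),h_C(-e_2)\big)\in\Re^4$. Because $\{\pm e_1,\pm e_2\}$ positively spans $\Re^2$, sublinearity of support functions forces $h_C\le 0$, hence $h_C\equiv 0$, whenever $v(C)=0$; so $C=\{0\}\iff v(C)=0$. The realisable vectors form exactly the polyhedral cone $\mathcal C=\{v\in\Re^4:v_1+v_3\ge0,\ v_2+v_4\ge0\}$, each $v\in\mathcal C$ being attained by the rectangle $[-v_3,v_1]\times[-v_4,v_2]$. I would then pick a unit vector $u$ with $u\notin\mathcal C$ and $-u\notin\mathcal C$ (e.g.\ $u=\tfrac1{\sqrt2}(-1,1,0,0)$) and set $M=I-c\,uu^{\top}$, $\dd{C,D}_\star=v(C)^{\top}Mv(D)$, where $c>1$ is taken close enough to $1$ that the double cone $\{v:v^{\top}Mv\le0\}$---which about $\pm u$ shrinks to the two rays through $\pm u$ as $c\downarrow1$, and those rays lie outside the closed cone $\mathcal C$---is disjoint from $\mathcal C\setminus\{0\}$. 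Axioms (A1) and (A2) are immediate from $v(\alpha C+\beta D)=\alpha v(C)+\beta v(D)$ (for $\alpha,\beta\ge0$) and symmetry of $M$, and (A3) holds since $v(C)^{\top}Mv(C)>0$ on $\mathcal C\setminus\{0\}$ by the choice of $c$. But $M$ has the negative eigenvalue $1-c$, so the form is indefinite, and I expect Cauchy--Schwartz to fail for a concrete pair: with $A=[-2,-1]\times[-2,1]$ and $B=[-2,1]\times[-2,-1]$, so that $v(A)=(-1,1,2,2)$ and $v(B)=(1,-1,2,2)$ both lie in $\mathcal C$, a short computation gives $\dd{A,A}_\star=\dd{B,B}_\star=10-2c$ and $\dd{A,B}_\star=6+2c$, whence $\dd{A,B}_\star^2-\dd{A,A}_\star\dd{B,B}_\star=64(c-1)>0$.

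The delicate step---and the one forcing dimension at least two---is reconciling ``$M$ indefinite'' with ``$v^{\top}Mv>0$ on all of $\mathcal C\setminus\{0\}$'': it rests on $\mathcal C$ being lopsided enough to avoid a thin double cone about $\pm u$, equivalently on $u$ and $-u$ having positive angular distance to $\mathcal C$, which is exactly what disappears on $\Re^1$. A conceptually cleaner route, at the cost of only a non-strict violation, avoids all such estimates: fix any inner product $\ddp_H$ on $\sp(\sH)$ (one exists by Theorem~\ref{thm:equiv}, e.g.\ the one associated with the set inner product of Example~\ref{def:standardInner}) together with a $\phi_0\in\sp(\sH)$ for which neither $\phi_0$ nor $-\phi_0$ is a support function---for instance $\phi_0=h_A-h_B$ with $A=[-1,1]\times\{0\}$, $B=\{0\}\times[-1,1]$, so that $\phi_0$ restricts to $|x_1|-|x_2|$, which is not subadditive since $\phi_0((0,1))+\phi_0((0,-1))=-2<0=\phi_0((0,0))$, and symmetrically $-\phi_0$ is not subadditive. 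Then $G(f,g)=\dd{f,g}_H-\dd{f,\phi_0}_H\dd{g,\phi_0}_H/\dd{\phi_0,\phi_0}_H$ is positive semidefinite with null space $\sp(\phi_0)$, which meets $\sH$ only at $0$, so $\dd{C,D}_\star:=G(h_C,h_D)$ satisfies (A1)---(A3); and since $h_A-h_B=\phi_0$ lies in the null space one gets $\dd{A,A}_\star=\dd{A,B}_\star=\dd{B,B}_\star>0$, so Cauchy--Schwartz holds \emph{with equality} although $A$ and $B$ are not proportional---once more a failure of (A4).
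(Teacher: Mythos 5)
Your first construction is correct and reaches the same goal as the paper by a genuinely different route. The paper also works in $\sK_2$ with finitely many support values, but it evaluates $h$ at the six directions $\pm e_1,\pm e_2,\pm(e_1+e_2)$ and engineers positivity by hand: $F(A,A)$ is a sum of squares plus twice the product of the two nonnegative subadditivity defects $h_A(e_1)+h_A(e_2)-h_A(e_1+e_2)$ and $h_A(-e_1)+h_A(-e_2)-h_A(-e_1-e_2)$, and the Cauchy--Schwartz failure is exhibited numerically for a disc and a triangle ($-F(A,A)+2F(A,B)-F(B,B)=6\sqrt2-\tfrac{33}{4}>0$). You instead use only the four values $v(C)=(h_C(e_1),h_C(e_2),h_C(-e_1),h_C(-e_2))$, identify the realisable vectors as the polyhedral cone $\mathcal C=\{v:v_1+v_3\ge0,\ v_2+v_4\ge0\}$, and take a rank-one negative perturbation $M=I-c\,uu^{\top}$ of the Euclidean form in a direction $u$ with $\pm u\notin\mathcal C$. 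This isolates the mechanism the paper leaves implicit --- a quadratic form can be positive on a proper convex cone yet indefinite on its span --- explains why no such example exists on $\Re^1$ (there $\sH_1\cup(-\sH_1)$ exhausts the two-dimensional span), and replaces the ad hoc numerics by the identity $\dd{A,B}_\star^2-\dd{A,A}_\star\dd{B,B}_\star=64(c-1)$, which I have checked together with the values $v(A)=(-1,1,2,2)$, $v(B)=(1,-1,2,2)$ and $\dd{A,A}_\star=\dd{B,B}_\star=10-2c$, $\dd{A,B}_\star=6+2c$.

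Two points to tighten. First, ``$c>1$ close enough to $1$'' should be made quantitative, and it is easy to do: for $v\in\mathcal C$ one maximises $(v_2-v_1)^2/\|v\|^2$ by taking $v_3=\max(0,-v_1)$, $v_4=\max(0,-v_2)$, and a one-variable computation gives $(v_2-v_1)^2\le\tfrac32\|v\|^2$, i.e.\ $(u\cdot v)^2\le\tfrac34\|v\|^2$ on $\mathcal C$; hence $v^{\top}Mv\ge(1-\tfrac{3c}{4})\|v\|^2>0$ for every $c\in(1,\tfrac43)$, so e.g.\ $c=\tfrac98$ works and the compactness argument can be dispensed with. Second, your alternative route via $G(f,g)=\dd{f,g}_H-\dd{f,\phi_0}_H\dd{g,\phi_0}_H/\dd{\phi_0,\phi_0}_H$ is sound as far as it goes, but it only breaks the equality clause of (A4), not the inequality itself; since the stated purpose of the proposition is to show the inequality is independent of (A1)--(A3), that version does not suffice on its own and should be presented only as a remark. (Indeed the equality clause is a fragile target: even the paper's Example~1 with $A=\{1\}$, $B=\{-1\}$ attains equality without $A=\lambda B$, $\lambda\ge0$.) Lead with the rank-one perturbation argument.
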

\begin{proof} 
Let $\{e_1,e_2\}$ be the standard basis of $\Re^2$. Define $F:\sK \times \sK \to \Re$ 
where
 \begin{align*}F(A,B) = & \frac{1}{8}\left( h_A (e_1) h_B (e_1) +  h_A (e_2) h_B (e_2) +  h_A (-e_1) h_B (-e_1) +  h_A (-e_2) h_B (-e_2) \right)\\
 &+ \left[ \left(  h_A (e_1)+ h_A (e_2) -  h_A (e_1+e_2) \right)\left(  h_B (-e_1)+ h_B (-e_2) - h_B (-e_1-e_2) \right) \right]\\
 &+ \left[ \left(  h_B (e_1)+ h_B (e_2) -  h_B (e_1+e_2) \right)\left(  h_A (-e_1)+ h_A (-e_2) - h_A (-e_1-e_2) )\right) \right].\\
 \end{align*} 
Then $F$ is symmetric and bilinear. For any $A \in \sK$  we have
 \begin{align*}&F(A,A) =  \frac{1}{8}\left(h_A(e_1)^2 + h_A(e_2)^2 + h_A(-e_1)^2 + h_A(-e_2)^2\right) \notag \\
 &+2 \left[ \left( h_A(e_1)+h_A(e_2) - h_A(e_1+e_2) \right)\left(h_A(-e_1)+ h_A(-e_2)-h_A(-e_1-e_2) \right) \right]\\
 & \geq 0.
 \end{align*}
 Now suppose $F(A,A) = 0$.  Then since 
 \begin{align*}
 h_A(e_1)+h_A(e_2) - h_A(e_1+e_2)&\geq 0  \intertext{  and  } h_A(-e_1)+ h_A(-e_2)-h_A(-e_1-e_2)  &\geq 0
  \end{align*}
  we have
 \[h_A(e_1)=h_A(e_2)=h_A(-e_1)=h_A(-e_2)=0.\] 
 Hence $h_A(\lambda e_i) = h_A(-\lambda e_i) = 0$ for $i=1,2$ and all $\lambda \geq 0$ and so for all $x \in \Re^2$,
\[h_A(x)= h_A(x_1e_1+x_2e_2) \leq h_A(x_1 e_1) + h_A(x_2 e_2) \leq 0.\]
By \eqref{eq:supportSubset}, $A \subseteq \{0\}$, giving equality as $A$ is non-empty.

We have shown that $F$ satisfies the first four axioms of a set inner product. We now show that it fails the last axiom. Let
$A$ be the unit circle in   $\mathbb{R}^2$ and $B$ be the triangle through the points $(0,0), (1,0), (0,1)$. Then
\[h_A(e_1) = h_A(e_2) = h_A(-e_1) = h_A(-e_2) = 1;\]
\[h_A(e_1+e_2) = h_A(-e_1-e_2) = \sqrt{2};\]
\[h_B(e_1) = h_B(e_2) = h_B(e_1+e_2) = 1;\]
\[h_B(-e_1) = h_B(-e_2) = h_B(-e_1-e_2) = 0.\]
%F(A,A) = 1/8 x (4) + 2(2-\sqrt{2})^2 
%F(A,B) = 1/8 x (2) + (2 - \sqrt{2}) x 0  +  1 x (2 - \sqrt(2)) 
%F(B,B) = 1/8 x 2  +  2 x 1 x 0
%FAA := 4/8 + 2*(2-sqrt(2))^2;FAB := 1/8 * 2 + (2 - sqrt(2)); FBB := 2/8;
We then have
\begin{align*}
2F(A,B) - 2\sqrt{F(A,A)F(B,B)} & = (\sqrt{F(A,A)} - \sqrt{F(B,B)})^2 - F(A,A) + 2 F(A,B) - F(B,B) \\
& \geq -F(A,A) + 2 F(A,B) - F(B,B) \\
& = 6 \sqrt{2} - \frac{33}{4} \\
& \approx 0.235
\end{align*}
so that the Cauchy-Schwarz inequality fails.
\qqed \end{proof}

\section{Lines and subspaces in $\sK$}

Having established that set inner products are essentially standard inner products (for support functions) we explore  implications for a geometry of the space of convex bodies. 

\begin{definition}
A subset $\sA \subseteq \sK$ is an {\bf affine subspace} if $\{h_A : A \in \sA \}$ equals the intersection of $\sH$ and an affine subspace of  $\sp(\sH)$. A {\bf subspace} of $\sK$ is an affine subspace which includes the zero-set $\{0\}$. \end{definition}

\begin{example}
Figure~\ref{fig:circleSquare} depicts elements on the line segment 
\[%\{\alpha S + (1-\alpha) C: 0 \leq \alpha \leq 1\} = 
\{A \in \sK_2 : h_A = \alpha h_S + (1-\alpha) h_C \mbox{ for some $\alpha \in [0,1]$} \}\]
connecting a square $S$ and a circle $C$ in $\sK_2$. Figure~\ref{fig:2dplane} depicts the two dimensional subspace of $\sK_2$ spanned by a circle $C$ and a triangle $T$,
\[\{A \in \sK_2: h_A = \alpha h_C + \beta h_T \mbox{ for some $\alpha,\beta \geq 0$}\}.\]
Note that the first example is bounded and the second example is unbounded.  Any subspace which contains a non-zero convex body $A$ also contains $\lambda A$ for all $\lambda \geq 0$, so that the subspace is unbounded. The same need not hold for affine subspaces.
\end{example}

\begin{figure}[htp]
\begin{center}
\includegraphics[width=0.8\textwidth]{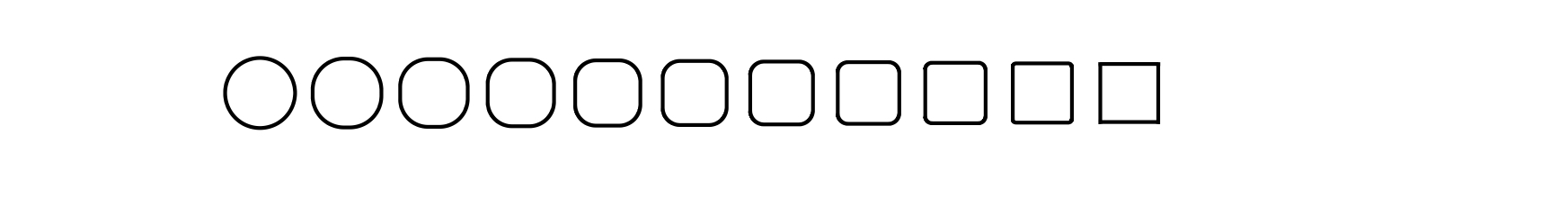}
\caption{\label{fig:circleSquare} The line segment in $\sK_2$ connecting a circle to a square.}
\end{center}
\end{figure}

\begin{figure}[htp]
\begin{center}
\includegraphics[width=0.8\textwidth]{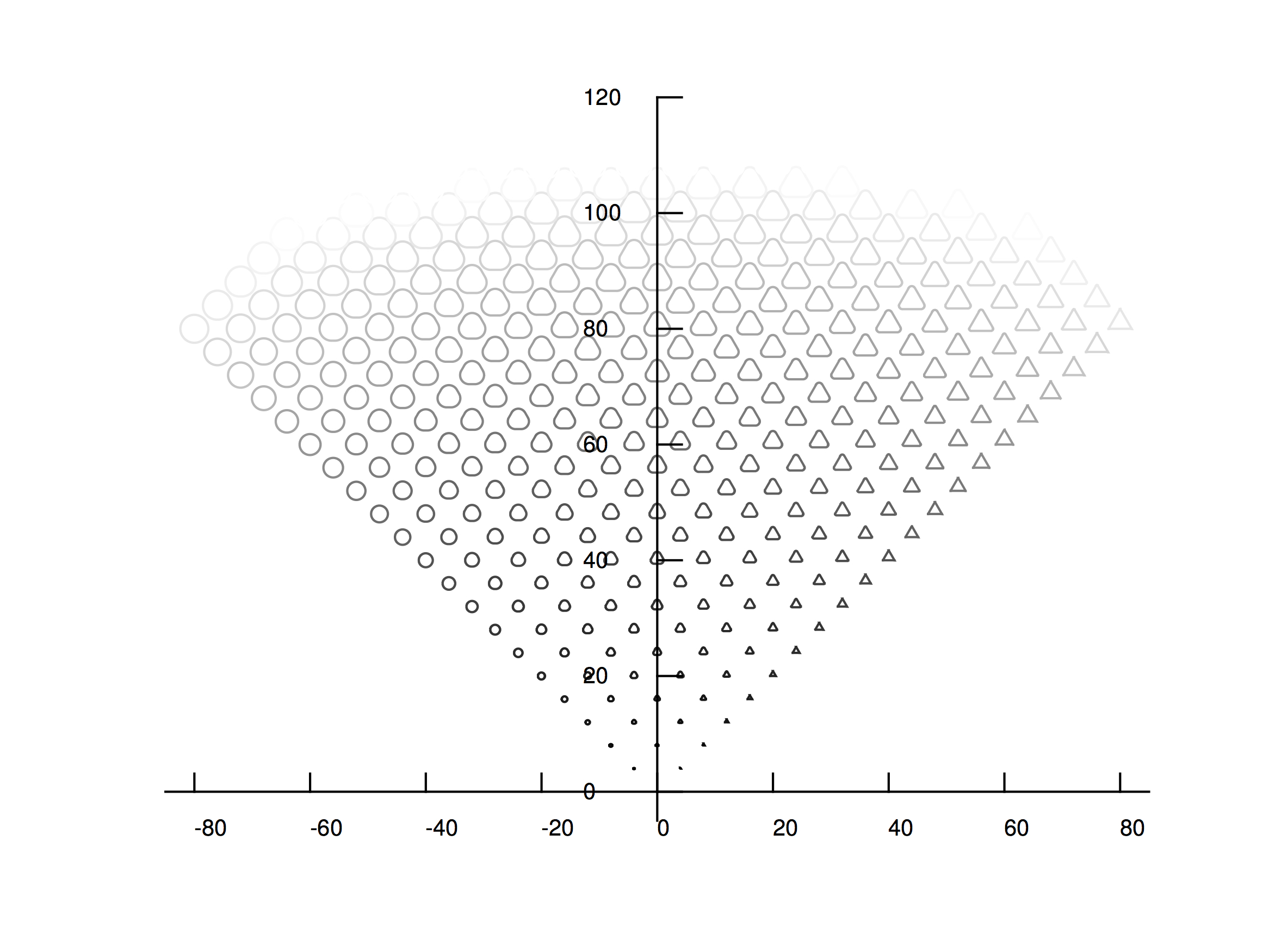}
\caption{\label{fig:2dplane} The two dimensional subspace of $\sK_2$ spanned by a circle at $(-40,40)$ and an equilateral triangle at $(40,40)$. }
\end{center}
\end{figure}

We say that a subset $\ell$ of $\sK$ is a {\bf line} if the corresponding set of support functions $\{h_A: A \in \ell\}$ equals the intersection of a line with $\sH$.  Because of the intersection with $\sH$, lines can have endpoints and subspaces can be bounded subsets. 

\begin{proposition}
Every line $\ell \subseteq \sK$ is one of three types (where $A,B \in \sK$,  $x \in \Re^n$):
\begin{enumerate}
\item {\em Translations:} $\{A + t x : t \in \Re\}$
\item {\em Rays:} $\{t A + B : t \geq 0\}$
\item {\em Segments:} $\{ t A + (1-t)B: 0 \leq t \leq 1\}$.
\end{enumerate}
\end{proposition}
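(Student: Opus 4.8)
The plan is to reduce everything to the structure of lines (affine subspaces of dimension $\leq 1$) in $\sp(\sH)$ and then to intersect with the convex cone $\sH$. Suppose $\ell \subseteq \sK$ is a line, so $L := \{h_A : A \in \ell\}$ is the intersection of $\sH$ with an affine line in $\sp(\sH)$. If that affine line meets $\sH$ in at most a point there is nothing to classify (a single point is a degenerate instance of all three types), so assume $L$ contains at least two distinct support functions $h_A$ and $h_B$. Then the affine line through them is $\{h_A + t(h_B - h_A) : t \in \Re\}$, and $L$ is the set of parameters $t$ for which this function lies in $\sH$, i.e.\ is sublinear. Since $\sH$ is a convex cone (closed under addition and non-negative scaling, by \eqref{eq:supAdd} and \eqref{eq:supScale}), the set $I = \{t \in \Re : h_A + t(h_B - h_A) \in \sH\}$ is a convex subset of $\Re$, hence an interval (possibly a point, a bounded/half-infinite/infinite interval). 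The classification will come from asking whether $I$ is all of $\Re$, a half-line, or a bounded interval, and in each case reading off what the parametrized family of bodies looks like via \eqref{eq:supAdd}--\eqref{eq:supScale}.

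First I would handle the case $I = \Re$, i.e.\ $h_A + t(h_B - h_A)$ is sublinear for every real $t$. Writing $g = h_B - h_A$, both $h_A + tg$ and $h_A - tg$ sublinear for large $t$ forces (after dividing by $t$ and letting $t \to \infty$) that $g$ and $-g$ are both sublinear, so $g$ is linear: $g(x) = x\cdot p$ for some $p \in \Re^n$. A linear functional $x \mapsto x\cdot p$ is exactly the support function $h_{\{p\}}$ of the singleton $\{p\}$, and $h_A + t\, h_{\{p\}} = h_{A + \{tp\}} = h_{A+t p}$ by \eqref{eq:supAdd}. Hence $\ell = \{A + tp : t \in \Re\}$, a translation (type 1), with $x = p$.

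Next, the case $I$ a proper half-line. By an affine reparametrization of $t$ (replacing $h_A$ by the support function at the finite endpoint of $I$, using closedness of $\sH$ under pointwise limits to see the endpoint lies in $L$) I may assume $I = [0,\infty)$; so $h_A + t g \in \sH$ for all $t \geq 0$ but not for any $t < 0$. As $t \to \infty$, dividing by $t$ shows $g$ itself is sublinear, so $g = h_C$ for some $C \in \sK$, and then $h_A + t h_C = h_{A + tC}$ by \eqref{eq:supAdd}; thus $\ell = \{A + tC : t \geq 0\}$. To match the stated form $\{tA' + B : t \geq 0\}$ one just relabels ($C \mapsto A$, $A \mapsto B$). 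Finally, if $I$ is a bounded interval, reparametrize so $I = [0,1]$; then $h_\ell = \{(1-t)h_A + t h_B : 0 \le t \le 1\}$, and $(1-t)h_A + t h_B = h_{(1-t)A + tB}$, giving a segment (type 3); here one should note that $h_B - h_A$ is \emph{not} sublinear (else $I$ would be unbounded on the right) and neither is $h_A - h_B$, which is consistent.

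The main obstacle I anticipate is the passage from "$h_A + t g$ is sublinear for all $t$ in an unbounded set" to "$g$ (or $\pm g$) is sublinear": this is the one genuinely analytic step and needs care with the limiting argument ($\tfrac1t(h_A + t g) = \tfrac1t h_A + g \to g$ pointwise as $t\to\infty$, and sublinearity \eqref{eq:sublinear} is preserved under pointwise limits because it is a closed condition). A secondary technical point is justifying that the finite endpoint of $I$ is actually attained in $L$ (i.e.\ that $\sH$ is closed under pointwise limits of uniformly bounded sublinear functions), which is standard for support functions but should be cited or noted. Everything else is bookkeeping with \eqref{eq:supAdd}--\eqref{eq:supScale} and the identification of linear functionals with singleton support functions.
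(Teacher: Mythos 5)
Your proof is correct and follows essentially the same route as the paper's: both rest on the trichotomy of whether $h_B-h_A$ is linear, sublinear but not linear, or not sublinear at all, which governs whether the parameter interval is all of $\Re$, a half-line, or bounded. The only real difference is that you argue from the shape of the interval $I$ back to the nature of $g$ via pointwise limits (and are more careful than the paper about why the finite endpoints of $I$ are attained, invoking closedness of $\sH$ under pointwise limits of sublinear functions), whereas the paper argues in the forward direction with explicit inequalities.
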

\begin{proof}
Suppose that $A,B \in \ell$, so that $X \in \ell$ if and only if 
\[h_X \in \{ h_B + t(h_A - h_B)  : t \in \Re\}.\]
We consider three cases. 

Case 1:  $h_A - h_B = h_{\{x\}}$ for some $x \in \Re^n$.  Then $\ell$ is a translation. \\
 
Case 2:   $h_A - h_B = h_C$ for some non-singleton $C \in \sK$ so $\ell = \{ h_B + t \, h_C: t \in \Re\}$.  As $C$ is not a singleton, $h_C$ is not linear and there are $x,y$ such that $h_C(x+y) < h_C(x) + h_C(y)$. We then have that if $t$ is sufficiently negative,
\[ h_B(x+y) + t h_C(x+y) > h_B(x) + t h_C(x) + h_B(y) + t h_C(y)\]
so that  the set $\{t : h_B + t\, h_C \in \sH\}$ is bounded below. 

Let $\lambda  = \min \{t: h_B + t\, h_C \in \sH\}$. Then $X \in \ell$ if and only if
  \[h_X \in  \{ (h_B +\lambda h_C) + t \,h_C : t \geq 0 \}\]
  so that $\ell$ is a ray.\\
  
  Case 3: $h_A - h_B \not \in \sH$. Let $f = h_A - h_B$. As $f \not \in \sH$ there is $x,y \in \Re^n$ such that $f(x) + f(y) - f(x+y) < 0$. Hence if 
  \[t > \frac{h_B(x) + h_B(y) - h_B(x+y)}{f(x+y) - f(x) - f(y)}\]
   then 
$h_B + t(h_A - t_B)  \not \in \sH$.   Thus the line has an endpoint at $A$ or at some point on the other side of $A$ from $B$. The same holds in the opposite direction.
\qqed \end{proof}

The characterisation of endpoints was considered (in three dimensions) by \cite{Vincensini1936a} and is closely related to Minkowski summands and decompositions. For $K,L \in \sK$ we say that $L$ is a {\bf summand} of $K$ if there is $M \in \sK$ such that $L+M = K$.  Schneider \cite[pg. 157]{Schneider2014a} shows that this is equivalent to being able to cover $K$ with translates of $L$ contained in $K$.

\begin{proposition}
Suppose $A,B \in \sK$. Then $A$ is an endpoint on the line through $A$ and $B$ if and only if  for all $\epsilon \in (0,1)$, the convex body $\epsilon B $ is not a summand of $A$. 
\end{proposition}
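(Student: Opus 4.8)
The plan is to reduce both sides of the claimed equivalence to the single condition ``$h_A-\epsilon h_B\in\sH$'' and then simply read off the result. Throughout assume $A\neq B$, so that the line through $A$ and $B$ is defined (when $A=B$ the statement is vacuous, and one checks directly that both sides fail, since $\epsilon A$ is a summand of $A$ for every $\epsilon\in(0,1)$). Write $f=h_A-h_B$. As in the proof of the preceding proposition, the line through $A$ and $B$ is $\{X\in\sK:h_X=h_B+tf\text{ for some }t\in\Re\}$, and since $\sH$ is a convex cone the admissible parameter set $I=\{t\in\Re:h_B+tf\in\sH\}$ is an interval; it contains $t=0$ (giving $B$) and $t=1$ (giving $A$, as $h_B+f=h_A\in\sH$), hence $I\supseteq[0,1]$. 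Because $0\in I$ lies strictly below $1$, the point $A$ (parameter $t=1$) is an endpoint of the line exactly when $1=\sup I$, that is, when $h_B+tf\notin\sH$ for every $t>1$.

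First I would reparametrize the tail $\{t>1\}$ by $\epsilon=1-1/t\in(0,1)$, so $t=1/(1-\epsilon)$. Since $\sH$ is closed under multiplication by positive scalars (by \eqref{eq:supScale}), $h_B+tf\in\sH$ iff $(1-\epsilon)(h_B+tf)\in\sH$; and a direct computation using $(1-\epsilon)t=1$ gives $(1-\epsilon)(h_B+tf)=h_A-\epsilon h_B$. As $t\mapsto 1-1/t$ is a bijection of $(1,\infty)$ onto $(0,1)$, this shows: \emph{$A$ is an endpoint of the line through $A$ and $B$ if and only if $h_A-\epsilon h_B\notin\sH$ for all $\epsilon\in(0,1)$.}

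Next I would identify the condition $h_A-\epsilon h_B\in\sH$ with ``$\epsilon B$ is a summand of $A$''. By definition the latter means there exists $M\in\sK$ with $M+\epsilon B=A$; applying support functions and using \eqref{eq:supAdd} and \eqref{eq:supScale}, together with the injectivity of $A\mapsto h_A$ (a consequence of \eqref{eq:supportSubset}), this is equivalent to the existence of $M\in\sK$ with $h_M=h_A-\epsilon h_B$, i.e.\ to $h_A-\epsilon h_B\in\sH$ (recalling that $\sH$ is precisely the set of support functions of convex bodies). Combining this with the previous paragraph, $A$ fails to be an endpoint iff $h_A-\epsilon h_B\in\sH$ for some $\epsilon\in(0,1)$ iff $\epsilon B$ is a summand of $A$ for some $\epsilon\in(0,1)$; negating yields the proposition.

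This argument is largely bookkeeping, so I do not anticipate a genuine obstacle. The one place deserving care is the passage from the informal notion ``$A$ is an endpoint'' to the precise statement ``$h_B+tf\notin\sH$ for all $t>1$'': this relies on $B$ occupying parameter $0$ on the same line together with the convexity of $\sH$, so that the admissible parameters form an interval and $A$ lies at an endpoint of it precisely when it is the upper endpoint. I would also take care that the reparametrization $\epsilon=1-1/t$ is set up so that $\epsilon$ ranges over the open interval $(0,1)$ matching the statement, rather than over $[0,1)$ or $(0,1]$.
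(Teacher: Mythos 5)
Your proof is correct and follows essentially the same route as the paper's: your substitution $\epsilon = 1-1/t$ is exactly the paper's change of variable $t = -\epsilon/(1-\epsilon)$, and identifying $h_A - \epsilon h_B \in \sH$ with ``$\epsilon B$ is a summand of $A$'' is the same computation the paper carries out with the explicit body $C$. Your version merely packages the two directions as a single chain of equivalences, with the added (and worthwhile) observation that the admissible parameter set is an interval containing $[0,1]$, which makes precise what ``endpoint'' means.
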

\begin{proof}
Suppose that there is $C$ such that $\epsilon B + C = A$. Let $t = \frac{-\epsilon}{1-\epsilon}$. Then $t<0$ and 
\[ (1-t) h_A + t h_B = \frac{1}{1-\epsilon} h_C\]
so $\frac{1}{1-\epsilon} C$ is on the line through $A$ and $B$ on the opposite side of $A$ from $B$.

Conversely, if there is $h_C \in \sH$ and $t < 0$ such that $h_C = (1-t) h_A + t h_B$ then 
\[\frac{-t}{1-t} h_B + \frac{1}{1-t} h_C =  \frac{-t}{1-t} h_B + \frac{1-t}{1-t} h_A + \frac{t}{1-t} h_B = h_A\]
so 
\[\left(\sfrac{-t}{1-t}\right) B + \left( \sfrac{1}{1-t} \right) C = A\]
 $\left(\sfrac{-t}{1-t} \right) B$ is  a summand of $C$.    
 \qqed \end{proof}

Which convex bodies can be endpoints? Intuitively, these are convex bodies on the boundary of $\sH$. 

\begin{proposition}
Every convex body is the endpoint of some line. 
\end{proposition}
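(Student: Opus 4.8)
The plan is to exhibit, for an arbitrary convex body $A$, a line through $A$ having $A$ as an endpoint. By the previous proposition, it suffices to find $B \in \sK$ such that for every $\epsilon \in (0,1)$, the body $\epsilon B$ is \emph{not} a summand of $A$. Equivalently (using Schneider's characterization of summands recalled above), no positive scaling $\epsilon B$ can be slid around so that its translates cover $A$ while staying inside $A$; the cleanest route is to choose $B$ so small-scaled copies have a ``corner'' sharper than anything $A$ contains.

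First I would handle the generic case where $A$ is not a singleton. Pick a supporting hyperplane of $A$ at which $A$ has an exposed point $p$ (such points exist and are dense in the extreme points, by Straszewicz's theorem). Near $p$ the support function $h_A$ is, in a suitable sense, ``maximally non-smooth''. Now take $B$ to be a ball: $B = \ssS^{n-1}$, the closed unit ball, so $h_B(x) = \|x\|$. I claim $\epsilon B$ is not a summand of $A$ for any $\epsilon > 0$. Indeed, if $\epsilon B + C = A$ then $h_A = \epsilon \|\cdot\| + h_C$, so $h_A - \epsilon\|\cdot\|$ would have to be sublinear (a support function). But $\|\cdot\|$ is strictly convex in every direction, while $h_A$ fails strict convexity in the directions normal to the facet structure at the exposed point $p$: concretely, if $u$ is the outer normal exposing $p$, then $h_A$ is differentiable at $u$ with $\nabla h_A(u) = p$, and one checks that $h_A(u+y) + h_A(u-y) - 2h_A(u) = o(\|y\|)$ is impossible to maintain after subtracting $\epsilon(\|u+y\| + \|u-y\| - 2\|u\|)$, which is $\asymp \|y\|^2/\|u\|$ to second order but has a genuine positive lower bound of order $\epsilon\|y\|^2$ — wait, this is the wrong comparison; the right statement is that subtracting a strictly convex function from $h_A$ destroys sublinearity unless $A$ itself already contains a ball summand, and even then only finitely much can be subtracted. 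So more carefully: the set $\{\epsilon \geq 0 : \epsilon B \text{ is a summand of } A\}$ is an interval $[0,\epsilon_{\max}]$ with $\epsilon_{\max} < \infty$ (it is the \emph{inradius-type} quantity measuring how much ball can be peeled off $A$), and if $\epsilon_{\max} = 0$ we are done with $B$ the ball; if $\epsilon_{\max} > 0$, replace $A$ by $A' := A \ominus \epsilon_{\max} B$ (the Minkowski difference) and iterate, or more simply take $B$ to be a body \emph{sharper} than $A$ anywhere — e.g. a simplex whose solid angle at some vertex is smaller than the solid angle of $A$ at each of its points. Then no translate of $\epsilon B$ fits a neighborhood of that vertex inside $A$, so $\epsilon B$ is never a summand.

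Thus the structure of the argument is: (1) reduce via the summand proposition; (2) recall that $L$ is a summand of $K$ iff $K$ can be covered by translates of $L$ lying in $K$ (Schneider); (3) choose $B$ with a vertex whose normal cone is strictly narrower than every normal cone of $A$ at its boundary points — for instance, since $A$ is bounded, its support function is finite, and one can take $B$ a long thin simplex so that $\epsilon B$ for small $\epsilon$ still has the same (narrow) normal cones at its vertices; (4) observe a translate of $\epsilon B$ inside $A$ cannot reach any boundary point of $A$ with its sharp vertex, because fitting that vertex against $\partial A$ would force $\partial A$ to have a normal cone at least as wide, contradicting the choice; hence the translates of $\epsilon B$ inside $A$ miss a neighborhood of $\partial A$ and cannot cover $A$; (5) conclude $\epsilon B$ is not a summand for any $\epsilon \in (0,1)$, so by Proposition~(the summand one) $A$ is an endpoint of the line through $A$ and $B$.

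The main obstacle is step (3)–(4): making precise ``$B$ is sharper than $A$ everywhere'' and verifying it survives scaling by $\epsilon$. The subtlety is that $A$ might itself have arbitrarily sharp vertices (e.g. $A$ a simplex), so ``sharper than $A$'' must be quantified against the \emph{supremum} of normal-cone widths, not a fixed one; a clean fix is to note that a polytope summand of $A$ must have all its edge directions among those of $A$ (summands of polytopes are polytopes with parallel faces), so taking $B$ a smooth strictly convex body — the ball again — forces $\epsilon_{\max} = 0$ whenever $A$ is itself a polytope, while taking $B$ a polytope with an ``illegal'' edge direction forces $\epsilon_{\max} = 0$ whenever $A$ is smooth; since every $A$ is either not strictly convex somewhere or not smooth somewhere (indeed a convex body cannot be simultaneously everywhere smooth and everywhere strictly convex only if... — actually it can, e.g. the ball), the real dichotomy to exploit is: if $A$ is not a ball then either $A$ has a non-smooth boundary point (take $B$ the ball) or $A$ has a point of non-strict convexity, i.e. a segment in $\partial A$ (take $B$ a ball again, since a ball summand would smooth out... no). I expect the cleanest published-style resolution is simply: take $B$ to be a single point's worth sharper — a simplex $B$ with the property that $h_A - \epsilon h_B$ is non-sublinear for all $\epsilon>0$ because $h_A$ is Lipschitz but near some direction $h_A$ is too flat (has a kink of bounded ``depth'') while $\epsilon h_B$ always has a kink of depth $\propto \epsilon$ in a nearby direction — and push the elementary inequality through. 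I would present it with $B = \ssS^{n-1}$ when $A$ is a polytope (or has a vertex) and with $B$ a simplex when $\partial A$ is $C^1$, and check that every convex body falls into (at least) one of these two cases.
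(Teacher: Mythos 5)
Your reduction to finding $B$ such that $\epsilon B$ is not a summand of $A$ for any $\epsilon\in(0,1)$ is exactly the right first move (and is how the paper proceeds), but the construction of $B$ has a genuine gap. First, the closing dichotomy --- ``$A$ is a polytope or has a vertex'' versus ``$\partial A$ is $C^1$'' --- is not exhaustive: a solid cylinder in $\Re^3$ has no vertex, is not a polytope, and is not $C^1$. Second, and more seriously, the smooth case is broken even if you repair the dichotomy to ``non-smooth somewhere'' versus ``smooth everywhere'': a $C^1$ body can have a simplex summand. For example $A=\Delta+(\text{unit ball})$ is $C^1$ and $\epsilon\Delta$ is a summand for every $\epsilon\le 1$, so ``take $B$ a simplex'' fails without a criterion singling out the right simplex, and no such criterion is supplied. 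The principle you lean on in step (4) --- that a translate of $\epsilon B$ inside $A$ reaching $\partial A$ would have to press the sharp vertex, with its whole normal cone, against $\partial A$ --- is false: in a decomposition $A=\epsilon B+C$ the boundary point of $A$ with outer normal $u$ splits as $\epsilon v+c$ with $v,c$ in the corresponding faces of $B$ and $C$, and its normal cone is the \emph{intersection} of the normal cones of $\epsilon B$ at $\epsilon v$ and of $C$ at $c$, which can be a single ray even when $B$ has a genuine vertex at $v$. So sharpness of $B$ alone never forces non-summandhood.

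The fix is already sitting in your first paragraph: you chose an exposed point $p$ of $A$ with supporting hyperplane $H$, but then abandoned it in favour of the ball. Instead take $B$ to be a line segment through $p$ contained in $H$ (this is the paper's choice). Any translate of $\epsilon B$ that contains $p$ is a nondegenerate segment in a direction parallel to $H$, hence contains points of $H$ other than $p$; since $H\cap A=\{p\}$, those points lie outside $A$, so no translate of $\epsilon B$ containing $p$ fits inside $A$. By the covering characterization of summands quoted from Schneider, $\epsilon B$ is not a summand of $A$ for any $\epsilon>0$, and the preceding proposition on summands finishes the proof. This works uniformly for every convex body, including singletons, with no case analysis.
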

\begin{proof}
Suppose that $A \in \sK$ and let $x$ be an exposed point of $A$ (that is, there is a supporting hyperplane $H$ of $A$ such that $\{x\} = H \cap A$). 
%NOTE: the existence of exposed points follows indirectly from Straszewicz's theorem  - but I think it is "obvious". 
Let $B$ be any line segment  containing $x$ and contained in $H$. Then any translation of a scaled version of $B$ which contains $x$ will also contain other points in $H$ which are therefore not in $A$. It follows that there is  $\epsilon B$ does not slide freely around $A$ for any $\epsilon \in (0,1)$, and $A$ is an endpoint of the line through $A$ and $B$.
\qqed \end{proof}

A convex body which has no summands (other than scaled and translated versions of itself) is said to be {\bf indecomposable}.  These convex bodies are exactly those which are endpoints of {\em every} line they are contained in. In $\Re^2$ the indecomposable bodies are the line segments and triangles \cite{Silverman1973a}, while for higher dimensions, the indecomposable bodies form a dense subset of $\sK_n$  with respect to the Hausdorff metric (see \cite[pg.165-6]{Schneider2014a} for details).

\section{Geometry of the set inner product}

Let $\ddp$ be a set inner product on $\sK_n$, where we write 
\[\dd{a,b} = \dd{\{a\},\{b\}}\]
for the induced inner product. 

For any convex body $A \in \sK$, the map $x \mapsto \dd{A,\{x\}}$ is a continuous linear functional. By the Riesz-Fr\'echet theorem there is a unique element $k_A \in \Re^n$  such that
\[ \dd{A,\{x\}} = \dd{k_A,x}\]
for all $x \in \Re^n$. We say that $k_A$ is the {\bf center} of $A$.

\begin{example}
Recall  the $L_2$ set inner product
\[\dpt{A,B} = \int h_A(x) h_B(x) d\phi(x)\]
defined in Example~\ref{def:standardInner}. The center of a set for this set inner product is given by the {\bf Steiner point}
\[s_A = \int z \,\, h_A(z) d\phi(z)\]
since for all $x \in \Re^n$ we have $h_{\{x\}}(z) = \dd{x,z}$ and 
\[\dpt{A,\{x\}} = \int  \dd{z,x} h_A(z) d \phi(z) = \dd{\int z\,\, h_A(z)  \,d \phi(z),x }  = \dd{s_A,x}.\]
\end{example}

The Steiner point for a convex body always lies in the relative interior of the set 
\cite[pg. 43]{Schneider2014a} so for this set inner product, $k_A \in A$. Furthermore, for any set inner product we have for all $x \in \Re^n$ that 
\[\dd{k_{\{a\}} , x} = \dd{\{a\},\{x\}} = \dd{a,x}\]
so $k_{\{a\}} = a$. However in general, we do not have that $k_A \in A$.

\begin{example}
Let $\ddp$ be a set inner product for closed intervals on the real line, where
\[ \dd{[a,b],[c,d] } = \mat{b&-a} M \mat{d \\ -c}\]
and 
\[ \dd{[a,a],[c,c] } = ac.\]
Then for $A = [a,b]$ we have $k_A =  \mat{b&-a} M \mat{1 \\ -1}$ so that for all $x \in \Re$,
\[ k_A x =  \mat{b&-a} M \mat{x \\ -x} = \dd{[a,b],[x,x]}.\]
As a concrete example, let
\[M = \mat{2 & & 3 \\ 3 & & 5}\]
so $M$ is positive definite and $\mat{1&-1} M \mat{1 \\ -1} = 1$. However 
\[k_A =  \mat{b&-a} \mat{2 & &3 \\ 3&  & 5} \mat{1 \\ -1} = 2a - b.\]
Hence if $A = [0,1]$ then $k_A \not \in A$.
\end{example}

We say that a convex body $A \in \sK$ is {\bf centered} if $k_{A} = 0$, which holds if and only if $\dd{A,\{x\}} = 0$ for all $x$.  We write $A_0 = A - \{k_A\}:=A + \{-k_A\}$, noting that for all $x$, 
\[\dd{A_0,\{x\}} = \dd{A,\{x\}} + \dd{ -k_A , \{x\}} = 0\]
so that $A_0$ is centered.

\begin{proposition} Suppose that $A_0 = A - k_A$ and $B_0 = B - k_B$. Then
\[\dd{A,B} = \dd{A_0,B_0} + \dd{k_A,k_B}.\]
\end{proposition}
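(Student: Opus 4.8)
The plan is to expand $\dd{A,B}$ by Minkowski bilinearity after writing $A$ and $B$ in terms of their centered parts and their centers. First I would record that, by the definition $A_0 := A + \{-k_A\}$ together with the additivity of support functions \eqref{eq:supAdd}, we have $h_A = h_{A_0} + h_{\{k_A\}}$ and hence $A = A_0 + \{k_A\}$ as convex bodies; likewise $B = B_0 + \{k_B\}$. The point to note is that all the Minkowski coefficients arising in this decomposition equal $1$, so axiom (A2) — which is only guaranteed for nonnegative coefficients — applies directly.

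Next I would apply (A2) in the first variable and then (A1) together with (A2) in the second variable to obtain the four-term expansion
\[ \dd{A,B} = \dd{A_0,B_0} + \dd{A_0,\{k_B\}} + \dd{\{k_A\},B_0} + \dd{\{k_A\},\{k_B\}}. \]
The two cross terms vanish: since $A_0$ is centered we have $\dd{A_0,\{x\}} = 0$ for every $x \in \Re^n$, so in particular $\dd{A_0,\{k_B\}} = 0$, and symmetrically $\dd{\{k_A\},B_0} = \dd{B_0,\{k_A\}} = 0$ because $B_0$ is centered. The remaining singleton term is, by the definition of the induced inner product, $\dd{\{k_A\},\{k_B\}} = \dd{k_A,k_B}$. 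Combining these gives $\dd{A,B} = \dd{A_0,B_0} + \dd{k_A,k_B}$, as claimed.

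I do not expect a serious obstacle here; the only points requiring care are (i) confirming that the set-level identity $A = A_0 + \{k_A\}$ genuinely holds (immediate from \eqref{eq:supAdd}, since convex bodies are determined by their support functions), and (ii) recognising that the vanishing of the two cross terms is precisely the centering condition $k_{A_0}=0$, i.e. $\dd{A_0,\{x\}}\equiv 0$, established in the paragraph preceding the proposition. Alternatively one could run the entire computation on the support-function side via Theorem~\ref{thm:equiv}: write $h_A = h_{A_0} + h_{\{k_A\}}$, expand $\dd{h_A,h_B}_H$ bilinearly in $\sp(\sH)$, and use that $h_{A_0}$ is $\dd{\cdot,\cdot}_H$-orthogonal to every $h_{\{x\}}$; this is the same argument phrased in the Hilbert space.
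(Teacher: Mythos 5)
Your proposal is correct and follows essentially the same route as the paper: expand $\dd{A,B}=\dd{A_0+\{k_A\},B_0+\{k_B\}}$ into four terms by Minkowski bilinearity and kill the two cross terms. The only cosmetic difference is that you invoke the previously established fact that $A_0$ and $B_0$ are centered, whereas the paper re-derives the vanishing of the cross terms inline by writing $\dd{A_0,k_B}=\dd{A,k_B}-\dd{k_A,k_B}=0$ via the defining property of the center.
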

\begin{proof}
We have
\begin{align*}
\dd{A,B} & = \dd{A_0 + k_A,B_0 + k_B}  \\
& = \dd{A_0,B_0} + \dd{A_0,k_B} + \dd{k_A,B_0} + \dd{k_A,k_B} \\
& = \dd{A_0,B_0} + \dd{A,k_B} - \dd{k_A,k_B} + \dd{k_A,B} - \dd{k_A,k_B} + \dd{k_A,k_B}\\
& = \dd{A_0,B_0} + \dd{k_A,k_B}.
\end{align*}
\item For all $x \in \Re^n$ we have 
\[ \dd{a,x} = \dd{\{a\},\{x\}} = \dd{k_a,x}.\]
Thus $k_a - a = 0$.
\qqed \end{proof}

For the remainder of this section we explore properties of the $L_2$ set inner product $\dpt{\cdot,\cdot}$ introduced in Example~\ref{def:standardInner}. It is not clear which of these results can be generalised.

\begin{proposition} \label{propL2}
Suppose that $\ddp$ equals the $L_2$ inner product
\[\dpt{A,B} = \int h_A(x) h_B(x) d\phi(x).\]
\begin{enumerate} 
\item For all $A \in \sK$, $k_A \in A$.
\item If $A$ and $B$ are centered  then 
$ \dd{A,B} = 0$ if and only if $A = \{0\}$ or $B = \{0\}$.
\item If $0 \in A \subseteq B$ then $\dd{A,A} \leq  \dd{B,B}$.
\end{enumerate} 
\end{proposition}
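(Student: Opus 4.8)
The plan is to prove the three parts separately, using the explicit integral representation of $\dpt{\cdot,\cdot}$ and the key identity $\int h_A(x)h_B(x)\,d\phi(x) = \int_{\ssS^{n-1}} \oh_A(x)\oh_B(x)\,d\mu(x)$ from Example~\ref{def:standardInner}, together with the fact that $h_A \geq 0$ whenever $0 \in A$ (immediate from $h_A(x) = \sup\{a\cdot x : a \in A\} \geq 0\cdot x = 0$) and the monotonicity \eqref{eq:supportSubset}.

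\emph{Part (1).} I would first recall (as the paper already notes, citing \cite[pg.~43]{Schneider2014a}) that the Steiner point $s_A$ lies in the relative interior of $A$, and that the center $k_A$ for the $L_2$ set inner product was computed to equal exactly $s_A$. Hence $k_A = s_A \in \mathrm{relint}(A) \subseteq A$. This part is essentially a citation plus the identification already made in the text, so it is short.

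\emph{Part (2).} Since $A$ and $B$ are centered, $\dd{A,\{x\}} = 0 = \dd{B,\{x\}}$ for all $x$, i.e.\ their Steiner points vanish. Suppose $\dpt{A,B} = 0$ with, say, $B \neq \{0\}$; I want to conclude $A = \{0\}$. The idea is to use positive definiteness together with a decomposition: write things in terms of $\oh_A, \oh_B \in \sp(\sH_S) \subseteq \ssC(\ssS^{n-1}) \subseteq L_2(\ssS^{n-1},\mu)$ and argue in the Hilbert space $L_2(\mu)$. Here the natural route is: because $0 \in A$ (from Part (1), $k_A = 0 \in A$), we have $\oh_A \geq 0$ on $\ssS^{n-1}$, and similarly $\oh_B \geq 0$; but $\dpt{A,B} = \int \oh_A \oh_B \, d\mu = 0$ with both integrands nonnegative forces $\oh_A \oh_B = 0$ $\mu$-a.e. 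If $B \neq \{0\}$ then $\oh_B > 0$ on a set of positive measure (indeed $\oh_B$ is continuous, so on an open set), forcing $\oh_A = 0$ there; then, combined with sublinearity of $h_A$ and $h_A \geq 0$, one deduces $\oh_A \equiv 0$, hence $A = \{0\}$ by \eqref{eq:supportSubset}. The delicate point is justifying that $\oh_A$ vanishing on a nonempty open subset of $\ssS^{n-1}$ (together with $0\in A$) forces $A = \{0\}$; I would handle this by noting $h_A(x) = 0$ on an open cone, and if $A$ contained any point $a \neq 0$ then $h_A(x) \geq a\cdot x > 0$ for all $x$ in the open halfspace-cone $\{x : a\cdot x > 0\}$, which must meet the open cone where $\oh_A = 0$ — contradiction.

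\emph{Part (3).} If $0 \in A \subseteq B$, then by \eqref{eq:supportSubset} we have $0 \leq h_A(x) \leq h_B(x)$ for all $x$ (nonnegativity because $0 \in A \subseteq B$). Squaring preserves the inequality for nonnegative quantities, so $h_A(x)^2 \leq h_B(x)^2$ pointwise, and integrating against $\phi$ (a positive measure) gives $\dpt{A,A} = \int h_A^2 \, d\phi \leq \int h_B^2 \, d\phi = \dpt{B,B}$.

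I expect the main obstacle to be Part (2): the statement that two centered, nonzero convex bodies cannot be orthogonal. The crux is getting from $\int \oh_A \oh_B \, d\mu = 0$ to a genuine contradiction, and the cleanest way is to exploit that \emph{both} support functions are nonnegative here (a consequence of $0$ lying in each body, which follows from Part (1) applied to the centered bodies $A$ and $B$) so that orthogonality degenerates into $\oh_A\oh_B = 0$ a.e.; the remaining work is the support-function/open-cone argument sketched above. An alternative, if one wants to avoid the nonnegativity shortcut, would be to decompose $L_2(\ssS^{n-1},\mu)$ using spherical harmonics — the degree-one harmonics correspond exactly to linear functionals, i.e.\ to Steiner points, so ``centered'' means ``orthogonal to the degree-one subspace'' — but this seems heavier than necessary given Part (1) already puts $0$ in both bodies.
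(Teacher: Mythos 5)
Parts (1) and (3) of your proposal are correct and coincide with the paper's proof (a citation of Schneider for the Steiner point, and pointwise squaring of $0 \leq h_A \leq h_B$ followed by integration). The genuine gap is in Part (2): your argument only ever uses the consequence ``$0 \in A$ and $0 \in B$'' of centeredness, obtained via Part (1), and the statement is simply false under that weaker hypothesis. Take $v \neq 0$, $A = \conv\{0,v\}$ and $B = \conv\{0,-v\}$. Then $h_A(x) = \max(0, v\cdot x)$ and $h_B(x) = \max(0, -v\cdot x)$, so $h_A h_B \equiv 0$ and $\dpt{A,B} = 0$, even though $0$ lies in both bodies and neither equals $\{0\}$. (These bodies are of course not centered: the Steiner point of $[0,v]$ is $v/2$.) This example also refutes two of your intermediate claims: a nonnegative sublinear function vanishing on a nonempty open set need not vanish identically ($\max(0,v\cdot x)$ vanishes on an open halfspace), and the open halfspace $\{x : a\cdot x > 0\}$ need not meet the open cone on which $\oh_A$ vanishes (take $a = v$; that halfspace is disjoint from $\{x : v\cdot x < 0\}$, which is exactly where $\oh_B > 0$ and hence where you deduced $\oh_A = 0$). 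So the step from ``$\oh_A\oh_B = 0$ a.e.'' to ``one factor vanishes identically'' cannot be completed from $0 \in A \cap B$ alone.

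The paper's proof closes precisely this loophole by a case split. Either some open halfspace $\{y : x\cdot y > 0\}$ meets both $A$ and $B$, in which case $h_A(x)h_B(x) > 0$ and (by continuity and nonnegativity) the integral is positive; or else $A$ and $B$ lie on a common line through the origin in opposite directions, say $A \subset \{tv : t \geq 0\}$, and then $h_A h_{\{v\}} \geq 0$ with $h_A(v)h_{\{v\}}(v) > 0$, so $\dd{A,\{v\}} > 0$ and $k_A \neq 0$ --- contradicting the \emph{full} centered hypothesis rather than its consequence $0 \in A$. Your hemisphere-overlap idea can be repaired along the same lines: the hemispheres $\{x : a\cdot x > 0\}$ and $\{x : b\cdot x > 0\}$ (for nonzero $a \in A$, $b \in B$) fail to intersect only when $b$ is a negative multiple of $a$, and that residual collinear configuration must be ruled out using $k_A = 0$ itself, exactly as the paper does.
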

\begin{proof}
\begin{enumerate}
\item See \cite[pg. 43]{Schneider2014a}.
\item 
Suppose by way of contradiction that neither $A$ nor $B$ equal $\{0\}$.  As $0 \in A \cap B$ we have that $h_A$ and $h_B$ are both non-negative. We will show 
that there is $x$ such that $h_A(x) h_B(x) > 0$, implying that 
\[ \int h_A(x) h_B(x) d\phi(x) > 0.\]

If there is $x$ such that the strict half-space $\{y: \dd{x,y} > 0 \}$ intersects both $A$ and $B$ then 
\[h_A(x) h_B(x) \geq \dd{a,x}  \dd{b,x} > 0.\]
Otherwise, $A$ and $B$ are both contained in a line $\{ tv:v \in \Re \}$ and they only intersect at $0$. Without loss of generality, 
$A \subset \{ tv : t \geq 0\}$. However then $h_A(x) h_{\{v\}}(x) \geq 0$ for all $x$ and $h_A(v) h_{\{v\}}(v)  > 0$, implying
\[ \dd{A,\{v\}} =  \int h_A(x) h_{\{v\}}(x) d\phi(x) > 0 \]
and $k_A \neq 0$.
\item As $0 \in A$ and  $A \subseteq B$ we have $0 \leq h_A(x) \leq h_B(x)$ for all $x$. 
\end{enumerate} 

\qqed \end{proof}

We write $A - A = A + (-1)A = \{ a_1-a_2:a_1,a_2 \in A\}$.

\begin{lemma}
If $A \cap B \neq \emptyset$ then 
\[(A \cup B) - (A \cup B) \subseteq (A-A) + (B-B).\]
\end{lemma}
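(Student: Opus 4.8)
The plan is a direct set-membership argument, using the hypothesis only to supply one interpolating point. First I would fix some $c \in A \cap B$, which exists precisely because $A \cap B \neq \emptyset$. Then I take an arbitrary element $z \in (A \cup B) - (A \cup B)$ and write $z = x - y$ with $x,y \in A \cup B$; I split into cases according to which of $A$ or $B$ contains $x$ and which contains $y$ (fixing one such membership for each of $x,y$ when a point happens to lie in both).

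If $x$ and $y$ both lie in $A$, then $z = (x-y) + (c-c)$ displays $z$ as the sum of $x-y \in A-A$ and $0 = c-c \in B-B$; the case $x,y \in B$ is symmetric. If $x \in A$ and $y \in B$, then $z = (x-c) + (c-y)$ with $x-c \in A-A$ and $c-y \in B-B$; and if $x \in B$, $y \in A$, write instead $z = (c-y) + (x-c)$ with $c-y \in A-A$ and $x-c \in B-B$. In every case $z \in (A-A)+(B-B)$, which gives the claimed inclusion.

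There is essentially no obstacle: the only points to watch are that $A \cap B \neq \emptyset$ is exactly what furnishes the point $c$ through which the ``mixed'' differences are routed, and that we need only exhibit \emph{one} valid decomposition of $z$, so any overlap between $A$ and $B$ is harmless. (If desired, one may also remark that $A-A$, $B-B$, and $(A-A)+(B-B)$ are all convex bodies, so that the statement is genuinely a containment in $\sK$, but this observation is not needed for the argument.)
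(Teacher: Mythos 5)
Your proof is correct and follows essentially the same route as the paper: the paper likewise handles the same-set cases by noting $0\in A-A$ and $0\in B-B$, and routes the mixed differences through a common point of $A\cap B$ exactly as you do with $c$. Nothing is missing.
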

\begin{proof}
Since $0 \in A-A$ and $0 \in B-B$ we have $(A-A) \cup (B-B) \subseteq (A-A) + (B-B)$. Thus if $x,y \in A$ ot $x,y \in B$ then $x-y \in (A-A) + (B-B)$. 

Suppose $z \in A \cap B$. If $x \in A$ and $y \in B$ then $x-z \in A-A$ and $z - y \in B-B$ so $x-y = (x-z) + (z-y) \in (A-A) + (B-B)$.
\qqed \end{proof}

A {\bf diversity} $(X,\delta)$ is an extension of the concept of a metric space to include comparisons of more than two points at a time \cite{BryantTupper2012}. Here $X$ is a set and $\delta$ is a function on finite subsets of $X$ satisfying
\begin{enumerate}
\item[(D1)] $\delta(A) \geq 0$ and $\delta(A) = 0 \Leftrightarrow |A| \leq 1$
\item[(D2)] For all $A,B,C$ with $B\neq \emptyset$, $\delta(A \cup B) + \delta(B \cup C) \geq \delta(A \cup C)$.
\end{enumerate}
We note that (D1) and (D2) imply that $\delta$ is monotonic with respect to inclusion: if $A \subseteq B$ then $\delta(A) \leq \delta(B)$.

\begin{proposition}
For any finite $A \subseteq X$ define
\[ \delta(A) = \sqrt{ \dpt{ \conv(A-A) , \conv(A-A) }}.\]
Then $\delta$ is a diversity.
\end{proposition}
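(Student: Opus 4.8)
The plan is to verify the two diversity axioms directly, leaning on the Lemma above and on Proposition~\ref{propL2}. Note first that $\conv(A-A)$ is a polytope, hence a convex body, whenever $A$ is finite and nonempty, so $\delta$ is well defined (with the usual convention $\delta(\emptyset)=0$). For (D1), nonnegativity is immediate since $\dpt{C,C}\ge 0$ for every convex body $C$. For the equality case, $\conv(A-A)=\{0\}$ if and only if $A-A=\{0\}$, which for finite $A$ happens exactly when $|A|\le 1$; together with positive definiteness (A3) this gives $\delta(A)=0\iff|A|\le 1$.

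The work is in the triangle-type inequality (D2). Fix finite $A,B,C$ with $B\neq\emptyset$; if $A\cup C=\emptyset$ the inequality is trivial, so assume not. Write $P=\conv\!\big((A\cup B)-(A\cup B)\big)$, $Q=\conv\!\big((B\cup C)-(B\cup C)\big)$ and $R=\conv\!\big((A\cup C)-(A\cup C)\big)$, so that $\delta(A\cup B)=\sqrt{\dpt{P,P}}$, $\delta(B\cup C)=\sqrt{\dpt{Q,Q}}$ and $\delta(A\cup C)=\sqrt{\dpt{R,R}}$. The key geometric step is the containment $R\subseteq P+Q$. To obtain it, apply the Lemma above to the two sets $A\cup B$ and $B\cup C$: since $B\neq\emptyset$ these sets have a common point, so
\[(A\cup B\cup C)-(A\cup B\cup C)\subseteq\big((A\cup B)-(A\cup B)\big)+\big((B\cup C)-(B\cup C)\big).\]
Since $A\cup C\subseteq A\cup B\cup C$ we also have $(A\cup C)-(A\cup C)\subseteq(A\cup B\cup C)-(A\cup B\cup C)$. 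Taking convex hulls of both sides, and using that $\conv$ preserves inclusions and satisfies $\conv(U+V)=\conv U+\conv V$ for Minkowski sums, gives $R\subseteq P+Q$.

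To finish I would invoke the $L_2$-specific monotonicity together with the Cauchy--Schwartz axiom. Since $A\cup C\neq\emptyset$ we have $0\in R$, and $R\subseteq P+Q$, so Proposition~\ref{propL2}(3) gives $\dpt{R,R}\le\dpt{P+Q,P+Q}$. Expanding by Minkowski bilinearity (A2) and symmetry (A1), $\dpt{P+Q,P+Q}=\dpt{P,P}+2\dpt{P,Q}+\dpt{Q,Q}$, and (A4) bounds $\dpt{P,Q}\le\sqrt{\dpt{P,P}\dpt{Q,Q}}$, whence $\dpt{P+Q,P+Q}\le\big(\sqrt{\dpt{P,P}}+\sqrt{\dpt{Q,Q}}\big)^2$. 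Taking square roots yields $\delta(A\cup C)\le\delta(A\cup B)+\delta(B\cup C)$, which is (D2).

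The argument is short, so the only real hazards are bookkeeping: checking that $\conv$ distributes over Minkowski sums and respects inclusion, that every $\conv(\,\cdot\,)$ appearing is a genuine convex body so that $\dpt{\cdot,\cdot}$ and Proposition~\ref{propL2}(3) apply, and that $0$ lies in $R$ so the monotonicity statement is usable. I would also flag that Proposition~\ref{propL2}(3) is established only for the $L_2$ set inner product; a general set inner product need not be monotone under inclusion, so this proof does not show the analogous construction is a diversity for an arbitrary set inner product.
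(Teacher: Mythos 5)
Your proof is correct and takes essentially the same route as the paper's: the key containment supplied by the Lemma, the fact that convex hulls respect inclusions and Minkowski sums, monotonicity of $\dpt{\cdot,\cdot}$ under inclusion via Proposition~\ref{propL2}(3), and the triangle inequality for the induced norm. The only cosmetic difference is that the paper establishes monotonicity of $\delta$ and subadditivity for intersecting sets as separate steps and then combines them to deduce (D2), whereas you apply the Lemma directly to $A\cup B$ and $B\cup C$; your closing caveat that the argument is genuinely $L_2$-specific is well taken and matches the paper's framing.
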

\begin{proof}
We check the axioms. If $A = \{x\}$ then $A-A = \{0\}$ and $\delta(A) = 0$. \\
If $A \subseteq B$ then $A-A \subseteq B-B$ and $\conv(A-A) \subseteq \conv(B-B)$. As $0 \in A-A$ we have $\delta(A) \leq \delta(B)$ by Proposition~\ref{propL2} part 3. That is $\delta$ is monotonic. \\
If $a,b \in A$ and $ a \neq b$ then $\conv(A-A) \neq \{0\}$ and $\delta(A) \geq \delta(\{a,b\}) > 0$. 
Suppose that $z \in A \cap B$. By the lemma we have
\[(A \cup B) - (A \cup B) \subseteq (A-A) + (B-B)\]
which implies
\[\conv( (A \cup B) - (A \cup B)) \subseteq \conv(A-A) + \conv(B-B).\]
Hence
\begin{align*}
\delta(A \cup B) &=  \dpt{ \conv(A \cup B - A \cup B) ,\conv(A \cup B - A \cup B) }^\half \\
& \leq \dpt{ \conv(A -A)  + \conv(B-B) ,\conv(A -A)  + \conv(B-B))  }^\half\\
& \leq  \dpt{ \conv(A -A) , \conv(A -A) }^\half +  \dpt{ \conv(B-B),\conv(B-B)}^\half\\
& = \delta(A) + \delta(B).
\end{align*}
This, and the fact that $\delta$ is monotonic, implies (D2).
\qqed \end{proof}

\section{Projecting convex bodies onto phylogenies}

In this final section we present an application of set inner products to computational biology. In ecology, a {\bf niche} can be defined as the set of environments in which an organism or species can survive/prosper. One can model the niche of a homogeneous population as a convex subset within some abstract space of environmental parameters. We are interested in studying how niches change over time.

A {\bf phylogeny} is a tree $T$ with leaves (degree one nodes) labelled bijectively by species and internal nodes corresponding to ancestral species. We assume that the tree is {\bf binary}: every internal node has degree three. The phylogeny is  model of the history of speciations or splits given rise to the species in the sample.

The general problem we consider is the following: suppose we have inferred  niche information for each of the observed (leaf) species;  can we infer how those niche differences evolved over time, and can we infer anything about the niches occupied by ancestral species? 

A comprehensive answer to these questions will require models of random and directed change based on stochastic processes together with assessment of model and statistical uncertainty. Much of the original motivation for this paper was setting up the machinery for such models. However we will see that set inner products already provide a direct estimate based on a  Steiner-tree problem for ancestral niches.

We first consider a special case. Consider $N$ species all children of a single ancestral node. This is often called a {\bf star tree} (Fig.~\ref{fig:star}). Let $\ddp$ be an inner product. We measure the distance between convex sets using the associated norm:
\[d(A,B) =  (\dd{h_A-h_B,h_A-h_B})^{1/2}.\]
Each niche is modelled by a convex body. The aim is to determine a convex body $X$ for the ancestral node which minimizes the total {\em squared parsimony} length of the tree:
\[ \sum_{i=1}^N d(A_i,X)^2.\]

\begin{figure}[htp]
\begin{center}
\includegraphics[width=0.3\textwidth]{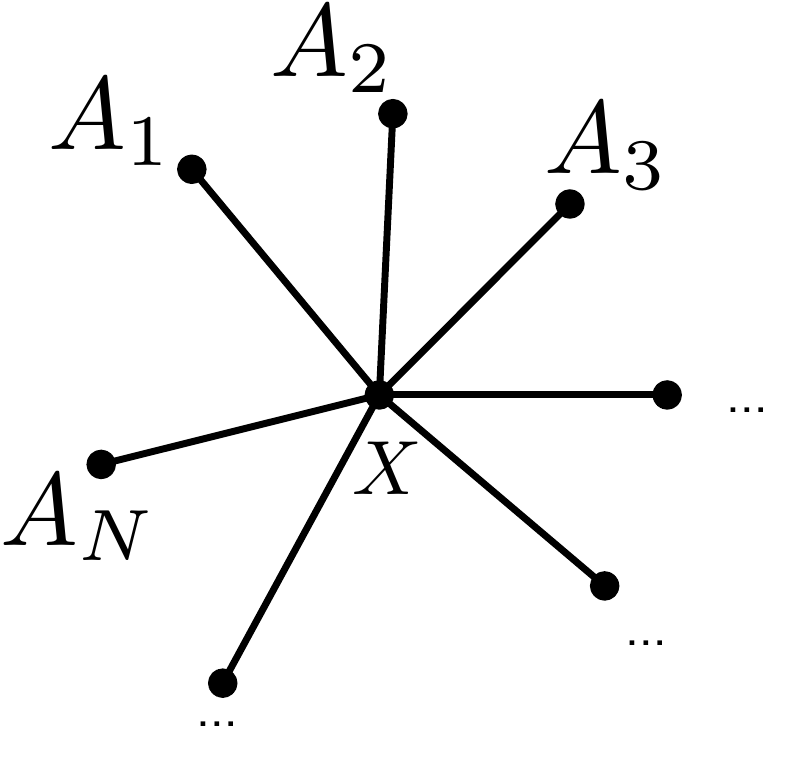}
\caption{\label{fig:star} A star tree}
\end{center}
\end{figure}

This problem can be solved for a general inner product space.

\begin{proposition} \label{prop:star}
The convex body $X$ which minimizes  $\sum_{i=1}^N d(A_i,X)^2$ is given by
\[X = \frac{1}{N} \sum_{i=1}^N A_i.\]
\end{proposition}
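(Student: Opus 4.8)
The plan is to reduce this to the elementary fact that, in any inner product space, the arithmetic mean of finitely many vectors is the unique minimizer of the sum of squared distances to those vectors, and then to notice that in our situation this mean is automatically a support function. Write $d(A_i,X)^2 = \dd{h_{A_i}-h_X,h_{A_i}-h_X}$ and, for $w$ ranging over $\sp(\sH)$, set $f(w) = \sum_{i=1}^N \dd{h_{A_i}-w,h_{A_i}-w}$. Putting $\oh = \frac1N\sum_{i=1}^N h_{A_i}$ and expanding by symmetry and bilinearity (a routine completion of the square, not requiring completeness), one obtains
\[ f(w) = N\dd{w-\oh,\,w-\oh} + \Bigl(\sum_{i=1}^N \dd{h_{A_i},h_{A_i}} - N\dd{\oh,\oh}\Bigr). \]
The parenthesised term is independent of $w$, so by positive definiteness $f$ attains its minimum precisely at $w=\oh$, and this minimizer is unique.

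The only place convexity enters is the observation that $\oh$ is itself a support function. Indeed, by \eqref{eq:supAdd} and \eqref{eq:supScale}, $\oh = \frac1N\sum_{i=1}^N h_{A_i} = h_X$ where $X = \frac1N\sum_{i=1}^N A_i$, and $X \in \sK$ since $\sK$ is closed under Minkowski addition and nonnegative scaling. Hence the unconstrained minimizer $\oh$ of $f$ already lies in $\sH = \{h_A : A \in \sK\}$, so it is a fortiori the minimizer of $f$ over $\sH$; equivalently $X$ minimizes $\sum_{i=1}^N d(A_i,X)^2$ among all convex bodies. Uniqueness of the minimizing $w$, together with the fact that a convex body is determined by its support function (cf. \eqref{eq:supportSubset}), yields uniqueness of $X$.

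The point to be careful about — and which I would state explicitly rather than leave implicit — is exactly this last step: a priori one is minimizing over the non-linear set $\sH$, not over a linear subspace, so the minimizer could in principle be the metric projection of $\oh$ onto $\sH$ rather than $\oh$ itself. The argument works only because Minkowski additivity and positive homogeneity of support functions force the average of support functions to again be a support function, so no projection is needed. Everything else is the standard Hilbert-space computation.
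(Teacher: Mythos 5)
Your proof is correct and follows essentially the same route as the paper's: the arithmetic mean of the support functions is the unique minimizer of the sum of squared distances in the ambient inner product space, and since Minkowski additivity and positive homogeneity make this mean itself a support function, no projection onto $\sH$ is required. Your explicit completion of the square and your remark on why the unconstrained and $\sH$-constrained minimizers coincide merely spell out what the paper states in two sentences.
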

\begin{proof}
This is a direct consequence of two observations. First, that $g = \frac{1}{N} \sum_{i=1}^N h_{A_i}$ is the unique minimizer of 
\[ \min_f  \sum_{i=1}^N \| g - h_{A_i}\|^2 = \min_f  \sum_{i=1}^N \dd{g - h_{A_i},  g - h_{A_i}}\]
on the completion of $(\sp(\sH),\ddp)$. Second, that $g = \frac{1}{N} \sum_{i=1}^N h_{A_i}$ is in the convex hull of a set of support functions, so is itself a support function for a convex body $X$. 
\qqed \end{proof}

Let $T$ be a phylogeny with $N$ leaves and let $V(T)$, $L(T)$, and $E(T)$ denote the sets of nodes, leaves and edges of $T$. Let $\chi:L(T) \rightarrow \sK_n$ be a map from the leaves of $T$ to the set of convex bodies. An {\bf extension} of $\chi$ is a function $\hchi:V(T) \rightarrow \sK_n$ such that $\hchi(x) = \chi(x)$ for leaves $x \in L(T)$. The (squared parsimony) length of $\hchi$ is defined
\begin{equation} \ell(\hchi) = \sum_{\{x,y\}: E(T)} d(\hchi(x),\hchi(y))^2. \label{eq:length} \end{equation}
The aim is to find an extension $\hchi$ of minimum length.

\begin{theorem} \label{thm:shapeTree}
There exists a unique extension $\hchi$ of minimum length. For each $v \in V(T)$ the convex body  $\hchi(v)$ can be written as a non-negative linear combination  
\[\sum_{x \in L(T)} \lambda_{vx} \chi(x)\]
of convex bodies at the leaves (and with $\lambda_{xx} = 1$ for all $x \in L(T)$). Furthermore, the coefficients $\lambda_{vx}$ do not depend on the underlying set inner product, nor on the map $\chi$.
\end{theorem}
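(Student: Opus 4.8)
The plan is to lift the problem into a Hilbert space, where the squared parsimony length \eqref{eq:length} becomes a quadratic form whose minimiser solves a discrete Laplace equation on $T$. By Theorem~\ref{thm:equiv} the distance $d$ is the metric of an inner product $\ddp$ on $\sp(\sH)$; let $\sV$ be the Hilbert space completion of $(\sp(\sH),\ddp)$. (The cases $N\le 2$ have no internal nodes and are trivial, so assume $N\ge 3$.) First I would relax: to each internal node $v$ associate an unknown $g_v\in\sV$, fix $g_x:=h_{\chi(x)}$ for each leaf $x\in L(T)$, and minimise
\[
E\big((g_v)_v\big)=\sum_{\{x,y\}\in E(T)}\|g_x-g_y\|^2
\]
over the internal variables. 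This is a strictly convex quadratic functional, its Hessian being twice the principal submatrix $L_{II}$ of the graph Laplacian $L$ of $T$ indexed by the internal nodes $I$; and $L_{II}$ is positive definite because $T$ is connected and every leaf is adjacent to an internal node. Hence $E$ has a unique critical point, which is its unique global minimiser.

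Second, I would extract the optimality conditions. The directional derivative of $E$ at an internal node $v$ in a direction $w\in\sV$ is $\sum_{u\sim v}\langle g_v-g_u,w\rangle$, so it vanishes for all $w$ exactly when $\sum_{u\sim v}(g_v-g_u)=0$; since $T$ is binary this reads $g_v=\tfrac13\sum_{u\sim v}g_u$. These equations make no reference to the inner product: they say only that $v\mapsto g_v$ is discrete harmonic on $T$ with Dirichlet data $(h_{\chi(x)})_{x\in L(T)}$ on the boundary. Rewriting as $L_{II}(g_v)_{v\in I}=-L_{IB}(h_{\chi(x)})_{x\in L(T)}$ and inverting the positive-definite $L_{II}$, the unique solution is the linear image
\[
(g_v)_{v\in I}=\big(-L_{II}^{-1}L_{IB}\big)(h_{\chi(x)})_{x\in L(T)} .
\]
Writing $\lambda_{vx}$ for the entries of $-L_{II}^{-1}L_{IB}$ (and $\lambda_{xx}=1$, $\lambda_{xy}=0$ for leaves, which is just the boundary condition) gives $g_v=\sum_{x\in L(T)}\lambda_{vx}h_{\chi(x)}$, and since this matrix depends only on the combinatorics of $T$, the $\lambda_{vx}$ depend neither on $\ddp$ nor on $\chi$. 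Because constant functions are discrete harmonic, $\sum_x\lambda_{vx}=1$; and the discrete maximum principle on the finite graph $T$ — equivalently, $\lambda_{vx}$ is the probability that the simple random walk started at $v$ is first absorbed at leaf $x$, or again that $L_{II}^{-1}\ge 0$ entrywise — yields $0\le\lambda_{vx}\le 1$.

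Finally I would return to convex bodies. By \eqref{eq:supAdd} and \eqref{eq:supScale}, $\sH$ is a convex cone closed under non-negative linear combinations, so for each $v$ the non-negativity of the $\lambda_{vx}$ makes $g_v=\sum_x\lambda_{vx}h_{\chi(x)}$ the support function of the genuine convex body $\hchi(v):=\sum_{x\in L(T)}\lambda_{vx}\chi(x)$. Thus the relaxed minimiser is feasible for the original convex-body problem, so it minimises $\ell$ among extensions; injectivity of $A\mapsto h_A$ together with uniqueness of the relaxed minimiser gives uniqueness of $\hchi$. I expect the two steps needing the most care to be (i) the \emph{exactness} of the relaxation — that the $\sV$-minimiser actually lands in the cone of support functions — which is exactly what the non-negativity of the $\lambda_{vx}$ provides, and (ii) proving that non-negativity together with the positive-definiteness of $L_{II}$; for both, the random-walk / maximum-principle description of the solution of the Laplace equation on the tree is the most economical route.
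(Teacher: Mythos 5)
Your proof is correct, and it reaches the same structural insight as the paper — the minimiser is the discrete harmonic extension of the leaf data on $T$, with combinatorial coefficients $\lambda_{vx}$, and the cone property of $\sH$ makes the Hilbert-space relaxation exact — but the execution of the key steps is genuinely different. The paper derives the stationarity condition $\hchi(v)=\tfrac13(\hchi(a)+\hchi(b)+\hchi(c))$ from Proposition~\ref{prop:star} and then \emph{constructs} the coefficients explicitly by a tree recursion: rooting at a leaf $x$, setting $\alpha_v=(3-\alpha_a-\alpha_b)^{-1}$ with $\alpha=0$ at leaves, and taking $\lambda_{vx}$ to be a product of the $\alpha_u$ along the path from $v$ to $x$; uniqueness is then shown by running the same recursion on the homogeneous system. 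You instead invoke the general theory of the grounded Laplacian: positive definiteness of $L_{II}$ gives existence and uniqueness in one stroke, and the M-matrix/random-walk description gives non-negativity (and the row sums equal to $1$). Your route is cleaner and more robust — it works verbatim for non-binary trees and indeed arbitrary connected graphs, and it is more explicit than the paper about two points the paper glosses over, namely why a minimiser exists at all and why the $\lambda_{vx}$ are non-negative (which is what guarantees $\hchi(v)$ is a genuine convex body). What the paper's recursion buys in exchange is an explicit closed form for the coefficients, which it uses immediately afterwards to claim an $O(n^2)$ algorithm. Two small imprecisions in your write-up, neither fatal: the correct reason $L_{II}$ is positive definite is that the boundary (leaf) set is non-empty and meets every connected component of the induced internal subgraph (automatic here since $T$ is connected), not merely that every leaf is adjacent to an internal node; and strictly speaking the minimiser lies in the finite-dimensional span of the leaf support functions, so no completion of $\sp(\sH)$ is actually needed.
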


\begin{proof}
Suppose that $\hchi$ is an extension with minimum length. Let $v$ be any node in $V(T) \setminus L(T)$ and let $a,b,c$ be its neighbours. we have from Proposition~\ref{prop:star} that 
\begin{equation} \hchi(v) = \frac{1}{3} (\hchi(a) + \hchi(b) + \hchi(c)).\label{Laplacian} \end{equation}
That is $\hchi$ satisfies a type of Laplacian system. We will show by construction that these equations can be simultaneously satisfied for all internal nodes.  

Fix a leaf $x \in L(T)$ and direct all edges in $T$ away from $x$. For any node $v \neq x$ we let $p_x(v)$ denote the neighbor of $v$ on the path from $v$ to $x$. For each node $v$ define $\alpha_v = 0$ if $v$ is a leaf and otherwise let $\alpha_v = (3 - \alpha_a - \alpha_b)^{-1}$, where $a$ and $b$ are the two nodes with $p_x(a) = p_x(b) = v$.  For each internal node $v$ let $\lambda_{vx}$ be the product of $\alpha_u$ over all internal nodes $u$ on the path from $v$ to $x$.  We repeat this process for all $x$.

First note that if $v$ is any internal node, $a,b,c$ are its neighbors and $c = p_x(v)$ then 
\begin{align*}
\frac{1}{3} (\lambda_{ax} + \lambda_{bx} + \lambda_{cx}) & = \frac{1}{3} (\alpha_a \alpha_v +  \alpha_b \alpha_v  + 1)  \lambda_{cx} \\ 
& = \frac{1}{3} \left( \frac{\alpha_a}{3 - \alpha_a - \alpha_b} + \frac{\alpha_b}{3 - \alpha_a - \alpha_b}  + 1 \right) \lambda_{cx} \\
& = \alpha_v \lambda_{cx} \\
& = \lambda_{vx}.
\end{align*}
As this holds for all $x$ we have 
\[\hchi(v) = \frac{1}{3} (\hchi(a) + \hchi(b) + \hchi(c)).\]

To prove uniqueness, suppose that $\chi(y) = \{0\}$ for all $y$.  Fix $x$ and direct the edges in the tree as above. We claim that if $\hchi$ satisfies \eqref{Laplacian} then $\hchi(v) = \{0\}$ for all $v$. To this end, we show first that $\hchi(v) = \alpha_v \hchi(p(v))$ for all nodes $v \neq x$. The result holds trivially for all leaves $x$. Suppose that $v$ has neighbors $a,b,c$ and that $p(v) = c$ and that the $\hchi(a) = \alpha_a \hchi(v)$ and $\hchi(b) = \alpha_b \hchi(v)$. We have  
\[\hchi(v) = \frac{1}{3} (\hchi(a) + \hchi(b) + \hchi(c)) = \frac{1}{3} (\alpha_a \hchi(v) + \alpha_b \hchi(v) + \hchi(c)) \]
so
\[\hchi(v) = \frac{1}{3 - \alpha_a - \alpha_b} \hchi(c) = \alpha_v \hchi(c).\]
If $v$ is adjacent to $x$ then $\hchi(v) = \alpha_v 0$, from which we have $\hchi(v) = 0$ for all $v$. 
\qqed \end{proof}

The proof of Theorem~\ref{thm:shapeTree} leads directly to a time optimal $O(n^2)$ time algorithm for computing the coefficients $\lambda_{vx}$. 
Figure~\ref{fig:shapeTree} shows the result of applying the algorithm to a set of  simple 2D convex bodies on a tree with seven leaves. The original shapes  label the leaves of the tree while the shapes giving a minimal length label the nodes in the interior of the tree.  Source code for determining the coefficients $\lambda_{vx}$ is available from the corresponding author.

\begin{figure}
\begin{center}
\includegraphics[width=0.7\textwidth]{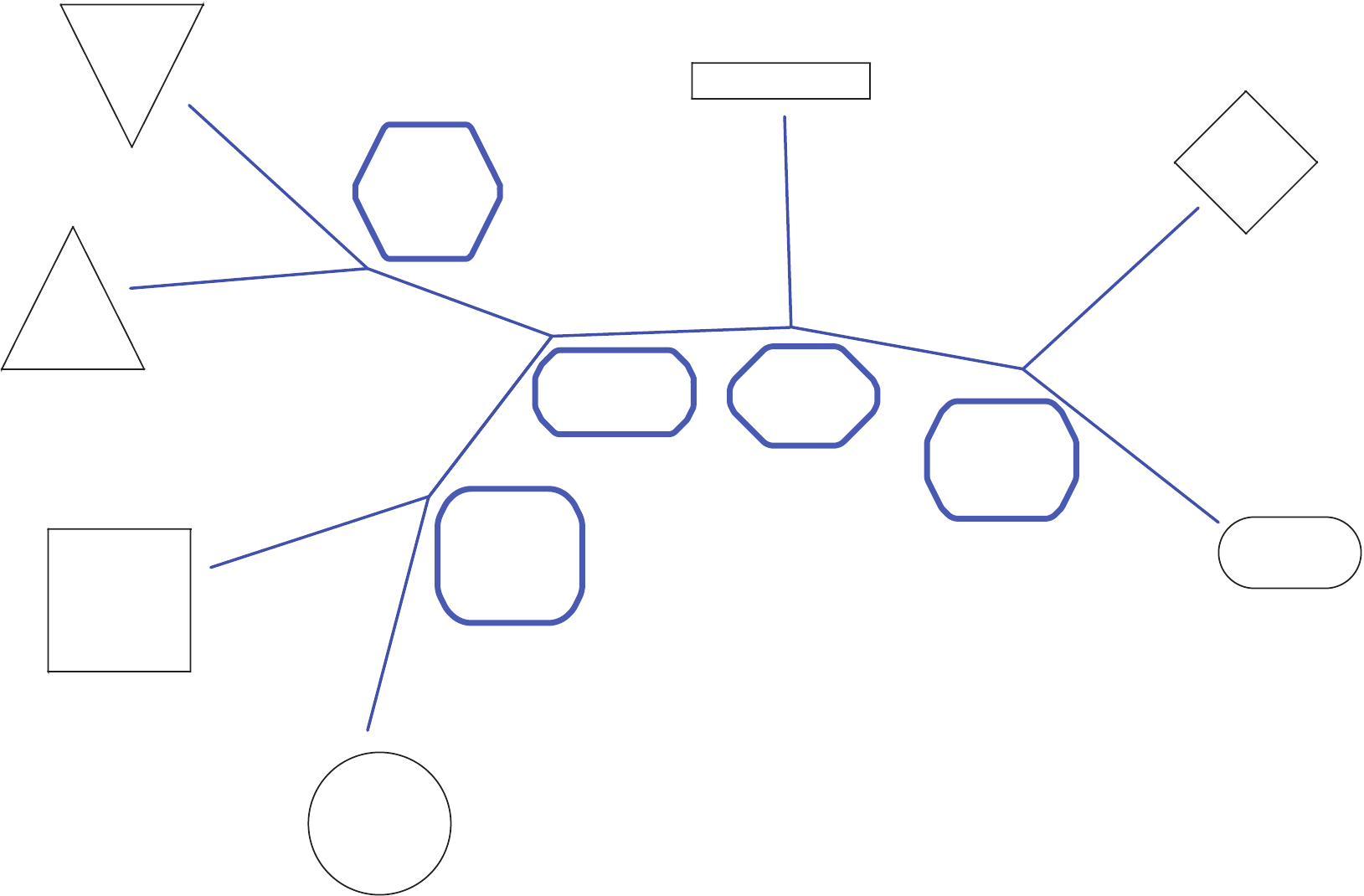}
\caption{\label{fig:shapeTree} A tree with seven leaves, each labelled by a convex shape (in black). The shapes minimizing the total length of the tree (Eq. \ref{eq:length}) are drawn in bold next to the corresponding internal nodes.}

\end{center}
\end{figure}

%\begin{acknowledgements}
%If you'd like to thank anyone, place your comments here
%and remove the percent signs.
%\end{acknowledgements}

% BibTeX users please use one of
%\bibliographystyle{spbasic}      % basic style, author-year citations
\bibliographystyle{spmpsci}      % mathematics and physical sciences
\bibliography{SetInnerBib}   % name your BibTeX data base

% Non-BibTeX users please use
%\begin{thebibliography}{}
%%
%% and use \bibitem to create references. Consult the Instructions
%% for authors for reference list style.
%%
%\bibitem{RefJ}
%% Format for Journal Reference
%Author, Article title, Journal, Volume, page numbers (year)
%% Format for books
%\bibitem{RefB}
%Author, Book title, page numbers. Publisher, place (year)
%% etc
%\end{thebibliography}

\end{document}